\documentclass[12pt,cd]{article}
\usepackage{amssymb}
\usepackage{amsfonts}
\usepackage{mathrsfs}
\usepackage{amsmath}
\usepackage{amsthm}
\usepackage{slashed}
\usepackage{url}
\usepackage{bm}
\usepackage{enumitem}
\usepackage{graphicx}
\usepackage{setspace}
\usepackage[compact]{titlesec}
\usepackage{abstract}
\usepackage{lmodern}
\usepackage[T1]{fontenc}
\usepackage{xcolor}
\usepackage{float}
\usepackage[all,cmtip]{xy}
\usepackage{mathtools}
\usepackage{tikz}
\usetikzlibrary{positioning,decorations.pathreplacing,decorations.markings,arrows.meta,calc,fit,quotes,cd,math,arrows,backgrounds,shapes.geometric,cd}
\usepackage[final,hyperindex,linktoc=page,hyperfootnotes=true]{hyperref}

\usepackage[noabbrev,capitalise]{cleveref}
\crefformat{equation}{(#2#1#3)}
\crefformat{enumi}{#2#1#3}
\crefformat{enumii}{#2#1#3}
\usepackage{thmtools}

\renewcommand{\phi}{\varphi}
\renewcommand{\theta}{\vartheta}

\newcommand{\ca}{\mathcal}
\newcommand{\nc}{\mathsf}
\newcommand{\mf}{\mathfrak}
\newcommand{\Set}{\nc{Set}}

\newcommand{\Loc}{\nc{Loc}}

\newcommand{\Frm}{\nc{Frm}}

\newcommand{\slat}{\nc{SLat}} 
\newcommand{\jslat}{\lor\text{-}\slat}

\newcommand{\Pos}{\nc{Pos}}

 \DeclareMathOperator{\Sh}{\mathrm{Sh}} 

\newcommand{\idl}[1]{{\mathrm{idl}}#1}
\newcommand{\id}{\mathrm{id}}

\newcommand\twoheaddownarrow{\mathrel{\rotatebox[origin=c]{270}{$\twoheadrightarrow$}}}

\theoremstyle{plain}
\newtheorem{theorem}{Theorem}[section]
\newtheorem{corollary}[theorem]{Corollary}
\newtheorem{lemma}[theorem]{Lemma}
\newtheorem{proposition}[theorem]{Proposition}

\theoremstyle{remark}
\newtheorem{remark}[theorem]{Remark}
\newtheorem{example}[theorem]{Example}

\theoremstyle{definition}

\author{Vasileios Aravantinos-Sotiropoulos\footnote{National Technical University of Athens, v\_aravantinos@mail.ntua.gr},
	Panagis Karazeris\footnote{Department of Mathematics,
		University of Patras, 26504 Patras, Greece,
		pkarazer@upatras.gr},\\
	and Joshua Wrigley\footnote{Department of Mathematics and Statistics, Faculty of Sciences, Masaryk University, Kotlářská 2, 611 37 Brno, Czech Republic, wrigley@math.muni.cz}}

\usepackage[
backend=biber,
sorting=nyt,
style=ieee
]{biblatex}
\title {Locales in presheaf toposes vs.\ presheaves of locales}
\date{}

\begin{document}
\maketitle
\begin{abstract}
By a well-known characterisation, in a presheaf topos every internal suplattice is a presheaf of suplattices, but not every presheaf of suplattices is an internal suplattice (and similarly for frames).  In this paper, we construct the free internal suplattice/frame on an presheaf of suplattices/frames, yielding a left adjoint to the forgetful functor from the respective internal structures to presheaves of structures. The description of this left adjoint has also appeared in recent work of Henry and Townsend, in connection to a different universal property, namely that of turning a lax natural transformation between poset-enriched functors to a strict one.
As an application of our construction, we 
investigate conditions on frames internal to a presheaf topos, such as being locally compact, compact, stably locally compact or Hausdorff, 
in terms
of properties of their sections in the base topos. In the first three cases, it is necessary that all the sections have the respective properties, while the Hausdorff property is not transferred to the sections.
Moreover for local compactness it is necessary that the transition maps preserve the way-below relation. Finally, for an internal locally compact frame in presheaves we analyse the connection of its way-below relation to the respective relations of its sections.
\end{abstract}

\renewcommand{\thefootnote}{\fnsymbol{footnote}} 
\footnotetext{
\emph{Key words:} internal locale, presheaf topos.
}
\footnotetext{
\emph{2020 Mathematics subject classification:} 18F70, 18F20, 18B25.
}     
\renewcommand{\thefootnote}{\arabic{footnote}} 

\section{Introduction-Preliminaries}
The aim of this work is to contribute to the study of locales (frames) that are internal to a presheaf topos in terms of their `external' description. By their `external' description, we mean the consequence of the well-known characterisation, due to Joyal and Tierney \cite{jt}, that in the topos of presheaves over a small category (with finite limits), the internal frames are precisely presheaves of frames such that the transition maps have left adjoint maps satisfying the Beck-Chevalley condition (accounting for the internal completeness of the underlying sup-lattice) that also satisfy the Frobenious reciprocity condition (accounting for the distributivity of internal suprema with finite infima). 

In this paper, we show that the forgetful functor from internal sup-lattices (respectively, internal frames) to presheaves of sup-lattices (resp., frames) has a left adjoint, which we denote by $\mf{z}$ (\cref{Thm: z left adjoint for suplattices}, resp., \cref{Thm: z left adjoint for internal frames}). The existence of the left adjoint $\mf{z}$, for the case of frames, is a special case of an internal `generalised ideal construction', as studied in \cite{car} and in the work of the third named author \cite{wr,wrphd} (for the non-internal version, see \cite[Proposition II.2.11]{ss}). The underlying construction of $\mf{z}$ giving the free internal sup-lattice (resp., frame) associated with a presheaf of sup-lattices (resp., frames) has also appeared in the work of Henry and Townsend \cite{ht}, but in connection to a different universal property it enjoys: $\mf{z}$ universally turns lax natural transformations between functors, valued in suitable poset-enriched categories, into (strict) natural transformations (for a comparison of these two universal properties, see \cref{rem:comparison_lax_to_nat}).  The existence of a left adjoint to the forgetful functor from internal suplattices/frames to presheaves of suplattices/frames can be leveraged to give a clear, algebraic account of how to compute, among other things, the internal power-object and the internal ideal-object (\cref{coro:internal-power-object} and \cref{coro:internal_ideals}).  This connection between the free suplattice completion $\mf{z}$ and internal power-objects and ideal-objects is also present in \cite{ht}; in contrast to \cite{ht}, our proof is straightforwardly algebraic. Moreover, we show that the free internal frame completion $\mf{z}$ is idempotent if and only if all presheaves of frames are internal frames, if and only if the presheaf topos is Boolean (\cref{Groupoid characterization}).

Although \cite{ht} provides an equivalence between the category of compact regular frames that are internal in a presheaf topos and the category of presheaves with values in the category of compact regular frames, this equivalence is not obtained taking the underlying presheaf of an internal frame, {\it à la} Joyal--Tierney, and so it remains a meaningful question to investigate compactness and Hausdorffness conditions in terms of properties of the sections of the internal frame.  We use the explicit description of the free frame completion $\mf{z}$ to analyse conditions such as those of being locally compact, compact, stably locally compact, or Hausdorff for locales internal to a presheaf topos. The importance of these respective subclasses of internal locales is, we believe, manifested by the various applications they have found in the literature. For instance, compact Hausdorff locales are crucial in formulating a Gelfand-Neimark duality internally in a presheaf topos \cite{h,bm}, which is then used in \cite{hls,svw} in a discussion of topos-theoretic foundations to algebraic quantum theory. 

The explicit computation of the sections of the internal ideal-object provides the first stepping stone for relating properties of the sections of an internal frame $L$ to internal properties of $L$. For example, compactness of the frame $L$ amounts to the equalizer of the supremum and the constant to top map
\[
\begin{tikzcd}
	\idl L \ar[shift left = 2]{r}{\bigvee} \ar[shift right = 2]{r}[']{1} & L
\end{tikzcd}
\]
being just $\{L\}$. By using $\mf{z}$ to calculate internal ideal-objects, it can easily be derived that the sections of a compact internal frame are compact (\cref{Prop: sections of compact}). Further, local compactness amounts to the existence of a map $\Lambda \colon L \to \idl L,$ left adjoint to the supremum map, and from that we can derive a section-wise left adjoint to the supremum map,
$\lambda_a \colon La \to \idl (La)$ 
yielding the local compactness of the sections of $L$ (\cref{Internal LC implies sectionwise LC}), and it can further be calculated that the transition maps preserve the way-below relation of the sections (\cref{Internal LC implies preservation of way-below}), or equivalently the localic transition maps are finitary. However, we show that the internal way-below relation of an internal locally compact frame is not reducible to the way-below relations of its sections.
In a similar fashion, stable local compactness (preservation of finite infima by the extra left adjoint) can be transferred to the sections of the internal locale (\cref{Prop: sections of stably locally compact}).
In connection to the Hausdorff condition, i.e the closedness of the localic diagonal
$L \to L\times L$, we obtain that, for a presheaf $M$ of Hausdorff frames, the associated internal frame $\mf{z}M$ is internally Hausdorff (\cref{Prop: sections Hausdorff implies zL Hausdorff}). In contrast, a counterexample, suggested to us by S.~Henry, shows that the Hausdorffness of an internal frame need not be transferred to the sections.

\paragraph{Terminology, notation and conventions.} 
We now fix some terminology and notation that will be used hereafter. 

Firstly, given an arrow $f $ in a category $\ca{C}$, we use $\vartheta_0(f)$ to denote the source of $f$, and $\vartheta_1(f)$ for the target.  Often we will work with objects internal to the presheaf topos $[\ca{C}^{op},\Set]$, where we adopt the convention that $\ca{C}$ has at least (weak) pullbacks.  Instances at an object $a$ of a natural transformation $\varphi$ may be denoted either as $\varphi_a$ or as $\varphi^a$, especially when a map $ \varphi^a_*$ adjoint to the latter occurs. 

A \emph{frame} is a complete lattice $L$ satisfying the infinite distributive law
\[
\textstyle u\wedge \bigvee_{i\in I}{v_i} = \bigvee_{i\in I}{(u\wedge v_i)}
\]
for all $u \in L$ and subsets $\{v_i\mid i\in I\}\subseteq L$. A \emph{frame homomorphism} is a map $L\to M$ between frames that preserves finite infima and arbitrary suprema (in particular, frame homomorphisms preserve $0$ and $1$). The category of frames and their homomorphisms is denoted by $\Frm$, and is complete and cocomplete (\cite[\S IV.3\&4]{picado_pultr}).

The category $\Loc$ is defined as the opposite of $\Frm$; the objects of $\Loc$ are referred to as \emph{locales} and its morphisms as \emph{localic maps}. 
We usually do not distinguish notationally between an object $L$ in the category of locales and the same object in the category of frames (although it is customary when we denote by $X$ the former, to write $OX$ to denote the same object, this time regarded as a frame). Accordingly, the frame homomorphism corresponding to a localic map $f \colon L \to M$ is denoted by $f$ while we write $f_*$ for its right adjoint (direct image of the localic map), which exists
since $f$ preserves (finite infima and) all suprema.

An ideal of a lattice $L$ is a subset $I \subseteq L$ such that 
\[0\in I, \qquad
x\leq y \in I \Rightarrow x\in I, \quad \text{and} \quad
x, \; y \in I \Rightarrow x\vee y \in I.\]
We denote by $\idl L$ the poset of ideals, ordered by inclusion,  which is a frame when $L$ is a distributive lattice. A morphism of lattices $f \colon L \to M$
induces a pair if adjoint maps $f[-] \dashv f^{-1}[-] \colon \idl M \to \idl L .$ Here $f^{-1}[-]$ denotes the inverse image of $I,$ which is 
an ideal when $I$ is, and $f[I]$ is \textit{the ideal generated} by the set-theoretic direct image (we adopt this convention to make our notation less cumbersome). When needed we denote by 
$$\langle S \rangle =\{ x \in L \; |\; \exists s_1 , \cdots , s_n \in S  \; x\leq s_1 \vee \cdots \vee s_n \}$$ 
the ideal generated by   $S\subseteq L.$ In particular 
$\langle \bigcup _i J_i \rangle  $ 
is the ideal generated by a union of ideals. When $L,$ $M$ are distributive lattices, $f^{-1}[-]$ is a frame morphism.

The way-below relation $\ll$ on a poset with directed suprema is defined as
$x\ll y  $ if, whenever $y\leq \bigvee S,$ with $S$ directed, there is an $s\in S$ so that  $x\leq s.$ A complete lattice is continuous if for all $x,$ the set 
$\twoheaddownarrow \! x =\{y \; |\; y\ll x\}$ is directed and $x=\bigvee \twoheaddownarrow \! x.$ 
This can be equivalently expressed (\cite{ss}, VII 2.1) as requiring that the
supremum map $\idl L \to L$ has a left adjoint ($x\mapsto \twoheaddownarrow \! x $) 
A frame is locally compact if it is a continuous lattice.

The concepts and results that we recalled above make sense and remain valid in the internal logic of a topos. As we shall see, concepts such as
local compactness and compactness will be defined internally in a topos via the poset of ideals.

\section{Adjoining internal suprema}\label{sec:free_const}
Frames that are internal in a presheaf topos $[\ca{C}^{op}, \Set]$ are presheaves of frames satisfying further conditions (\cite[\S VI.4]{jt}, \cite[\S C1.6.9]{el}, where we recall our convention that $\ca{C}$ has all pullbacks). 
In particular, the internal completeness of the poset $L$ is captured by the existence, for each $f\colon b\to a \in  \ca{C},$
of a left adjoint $\Sigma _f \dashv Lf \colon La \to Lb,$ such that a Beck-Chevalley condition is satisfied for pullbacks in $\ca{C}.$
A morphism of internal sup-lattices $\varphi \colon L \to M$ has to preserve suprema indexed by objects in the topos, so that all squares
\begin{displaymath}
	\xy
	\xymatrix{ Lb \ar[d]_{\Sigma ^L _f } \ar[r]^{ \varphi ^{b }} & Mb \ar[d]^{\Sigma ^M _f}\\
		La \ar[r]^{ \varphi ^a}	& Ma}
	\endxy
\end{displaymath}
must commute. If $L$  is moreover a frame, then $\Sigma _f \dashv Lf$ is further required to satisfy the Frobenious reciprocity condition $\Sigma_f( Lf(u) \land v) = u \land \Sigma_f v$ for all $u \in La$ and $v \in Lb$ (so that $L$ as a functor takes values in the category of frames and open maps). We remark here that, as it is not too hard to see, these characterisations in fact remain valid if $\ca{C}$ only has \emph{weak} pullbacks and the Beck-Chevalley condition is required for the latter. (For cases where $\ca{C}$ lacks even weak pullbacks, the reader is directed to \cite{car,wr_tac}.)

As a result, there are forgetful functors
\begin{align*}
	U \colon \nc{SupLat}[\ca{C}^{op}, \Set] & \longrightarrow [\ca{C}^{op}, \nc{SupLat} ], \\
	U \colon \Frm[\ca{C}^{op}, \Set] & \longrightarrow [\ca{C}^{op}, \Frm ],
\end{align*}
which are however not full. Indeed, not every natural transformation between suplattice or frame-valued functors coming from internal suplattices or frames respectively need preserve suprema in the internal sense described above.  As aforementioned, as a particular case of the ``geometrisation'' procedure described in \cite{car,wr,wrphd}, we can construct left adjoints to these forgetful functors.  In this section, we provide a complete description of the left adjoint, as well as explicit proofs of its universal property.
\begin{remark}
	Recall from \cite{lawvere} that the left adjoints $\Sigma_f$, satisfying Beck-Chevalley and Frobenius, are used in categorical logic to interpret existential quantification.  From the logical perspective, a presheaf of frames can be understood as encoding the algebraic syntax of some theory of geometric logic \emph{without} existential quantification (see \cite[\S D1]{el} for more on the syntax of geometric logic).  Our free internal frame on a presheaf of frames is therefore similar in spirit to the existential completions found in \cite{trotta,wr_ex_comp}, a connection that will be made concrete in \cref{rem:comparision_excomp}.
\end{remark}

\subsection{The free internal suplattice on a presheaf of suplattices.}
For suplattices, a functor $\mf{z}$ in the opposite direction is defined as follows: 
$$\mathfrak{z}L (a) = \left\{(u_f )\in \prod _{\vartheta _1 f=a}\; L(\vartheta _0 f) \; \middle\vert \; \forall g\colon \vartheta _0 (g) \to \vartheta _0 (f) \;( L(g)(u_f ) \leq u_{fg}) \right\}.$$
For an arrow $r \colon b\to a$ in $\mathcal{C},$ $\mathfrak{z}L(r) \colon  \mathfrak{z}L(a) \to \mathfrak{z}L(b)$ sends the tuple 
$(u_f )_{\vartheta _1 f=a}$ to $(u_{rh})_{\vartheta _1 h=b}$. Each $\mathfrak{z}L (a)$ is then a complete lattice with suprema given componentwise, as it is easy to verify. Since $L$ takes values in $\nc{SupLat}$, a left adjoint $\Sigma ^{\mathfrak{z}L}_r $ to $\mf{z}Lr$ can  be defined as follows
\[
\textstyle\Sigma_{r}^{\mathfrak{z}L}(u_f)_{\vartheta_1f=b} =  \left(\bigvee _{rf=g} u_f \right)_{\vartheta_1g=a} ,
\]
which we can easily confirm yields a left adjoint:
\begin{eqnarray*}
	\textstyle\left(\bigvee _{rf=g} u_f \right)_{\vartheta_1 g = a} \leq (v_g)_{\vartheta_1 g = a} &\Leftrightarrow & \textstyle\forall g \; \bigvee _{rf=g} u_f  \leq v_g \\
	&\Leftrightarrow &\forall g \; \forall f \; (rf = g \Rightarrow  u_f \leq v_g \; )  \\
	&\Leftrightarrow & \forall f \; (u_f \leq v_{rf} )  \\
	&\Leftrightarrow & (u_f)_{\vartheta_1 f = b} \leq \mf{z}L(r)(v_{rf})_{\vartheta_1 f =b}.
\end{eqnarray*}

Next, we check the Beck-Chevalley condition, i.e.\ for a pullback square
\begin{displaymath}
	\begin{tikzcd}
		d\ar[r,"q"]\ar[d,"p"'] & c\ar[d,"s"] \\
		b\ar[r,"r"] & a,
	\end{tikzcd}
\end{displaymath}
the following square also commutes
\begin{displaymath}
	\begin{tikzcd}
		\mf{z}L(b)\ar[r,"\Sigma_{r}^{\mathfrak{z}L}"]\ar[d,"\mf{z}L(p)"'] & \mf{z}L(a)\ar[d,"\mf{z}L(s)"] \\
		\mf{z}L(d)\ar[r,"\Sigma_{q}^{\mathfrak{z}L}"] & \mf{z}L(c).
	\end{tikzcd}
\end{displaymath}
For any $(u_g)_{\vartheta_1g=b}\in\mf{z}L(b)$ we have that
\[\textstyle\mf{z}L(s)\Sigma_{r}^{\mf{z}L}(u_g)_{\vartheta_1g=b}=\mf{z}L(s)\left(\bigvee_{rg=f}u_g\right)_{\vartheta_1f=a}=\left(\bigvee_{rg=sh}u_g\right)_{\vartheta_1h=c},\]
\[\textstyle\text{and} \quad \Sigma_{q}^{\mf{z}L}\mf{z}L(p)(u_g)_{\vartheta_1g=b}=\Sigma_{q}^{\mf{z}L}(u_{pk})_{\vartheta_1k=d}=\left(\bigvee_{qk=h}u_{pk}\right)_{\vartheta_1h=c}.\]
Since $rg=sh$ happens if and only if there is a $k$ such that $g=pk$ and $qk=h$, it is clear that for each $h$ with $\vartheta_1h=c$ the corresponding components are equal.

A natural transformation $\varphi \colon L \to M$ in $[\ca{C}^{op}, \nc{SupLat} ]$ induces a morphism
$\mf{z} \varphi \colon \mf{z}L \to \mf{z}M$ with 
$\mf{z} \varphi ^{a} \colon \mf{z}L(a) \to \mf{z}M(a)$ acting by
\[
\mf{z} \varphi ^{a} ((u_f ) _{\vartheta _1 f=a})= (\varphi ^{\vartheta _0 f}(u_f ))_{\vartheta _1 f=a} .
\]
This is well-defined, in the sense that $\mf{z} \varphi ^{a} ((u_f ) _{\vartheta _1 f=a}) \in (\mf{z}M)(a)$, since, for $g \colon \vartheta _0 g \to \vartheta _1 g = \vartheta _0 f$, 
\[Mg (\varphi ^{\vartheta _0 f}(u_f ))=\varphi ^{\vartheta _0 (fg)} Lg (u_f ) \leq \varphi ^{\vartheta _0 (fg)} (u_{fg}) ,\]
by the commutativity of the following naturality square (and noticing that $\vartheta _0 (fg) = \vartheta _0 (g)$)
\begin{displaymath}
	\xy \xymatrix{ L(\vartheta _0 f) \ar[d]_{Lg} \ar[r]^{ \varphi ^{\vartheta _0 (f)} } & M(\vartheta _0 f) \ar[d]^{Mg}\\
		L(\vartheta _0 g) \ar[r]^{ \varphi ^{\vartheta _0 (g)} }	& M(\vartheta _0 g)	.}
	\endxy
\end{displaymath}
In addition, it is easy to see that $\mf{z}\varphi$ is natural, and moreover it is clear that each $\mf{z}\varphi^{a}$ preserves suprema, since these are computed componentwise in $\mf{z}L(a)$ and each instance of $\varphi$ is a morphism in $\nc{SupLat}$. Finally, $\mf{z}\varphi$ also preserves suprema in the internal sense, i.e.\ for every $r\colon b\to a\in\ca{C}$ the following square commutes,
\begin{displaymath}
	\begin{tikzcd}
		\mf{z}L(b)\ar[r,"\mf{z}\varphi^{b}"]\ar[d,"\Sigma^{\mf{z}L}_{r}"'] & \mf{z}M(b)\ar[d,"\Sigma^{\mf{z}M}_{r}"] \\
		\mf{z}L(a)\ar[r,"\mf{z}\varphi^{a}"] & \mf{z}M(a),
	\end{tikzcd}
\end{displaymath}
which can be seen from the following calculation:
\begin{align*}
	\mf{z}\varphi^{a}\Sigma_{r}^{\mf{z}L}(u_g)_{\vartheta_1g=b} &= \textstyle\mf{z}\varphi^{a}\left(\left(\bigvee_{rg=h}u_g\right)_{\vartheta_1h=a}\right) \\
	& = \textstyle\left(\varphi^{\vartheta_0h}\left(\bigvee_{rg=h}u_g\right)\right)_{\vartheta_1h=a} \\
	&= \textstyle\left(\bigvee_{rg=h}\varphi^{\vartheta_0h}(u_g)\right)_{\vartheta_1h=a} \\
	& =\Sigma_{r}^{\mf{z}M}(\varphi^{\vartheta_0g}(u_g))_{\vartheta_1g=b} \\
	&=\Sigma_{r}^{\mf{z}M}\mf{z}\varphi^{b}(u_g)_{\vartheta_1g=b}.
\end{align*}
\begin{remark}		
	Being a suplattice morphism, each $\mathfrak{z} \varphi ^{a} \colon \mathfrak{z}L(a) \to \mathfrak{z}M(a)$ has a right adjoint, which we will denote by
	$\mathfrak{z} \varphi _* ^{a} \colon \mathfrak{z}M(a) \to \mathfrak{z}L(a).$ Since we will need its explicit description later, let us record here that this is given by
	$$\mathfrak{z} \varphi _* ^{a} ((v_f )_{\vartheta _1 f =a}) =  (\varphi _* ^{\vartheta_0f} (v_f )_{\vartheta _1 f =a}),$$
	where, for every $a\in\ca{C}$, $\varphi _* ^{a}$ denotes the right adjoint of the suplattice morphism $\varphi ^{a} \colon La \to Ma $.
	To see that this is a correct definition, observe that for the maps displayed in the diagram
	\[\begin{tikzcd}
		{L(\theta_0f) } & {M(\theta_0 f)} \\
		{L(\theta_0g)} & {M(\theta_0 g)},
		\arrow["{\phi^{\theta_0 f}}", shift left=2, from=1-1, to=1-2]
		\arrow["Lg"', from=1-1, to=2-1]
		\arrow["{\phi_\ast^{\theta_0 f}}", shift left=2, from=1-2, to=1-1]
		\arrow["Mg", from=1-2, to=2-2]
		\arrow["{\phi^{\theta_0 g}}", shift left=2, from=2-1, to=2-2]
		\arrow["{\phi_\ast^{\theta_0 g}}", shift left=2, from=2-2, to=2-1]
	\end{tikzcd}\]
	we have that
	\[\varphi ^{\vartheta _0 g}\;  Lg \; \varphi _* ^{\vartheta _0 f}(v_f ) =
		Mg \; \varphi ^{\vartheta _0 f} \; \varphi _* ^{\vartheta _0 f} (v_f ) \leq Mg (v_f ) \leq v_{fg}, \]
	which is equivalent to 
	$Lg \; \varphi _* ^{\vartheta _0 f}(v_f ) \leq \varphi _* ^{\vartheta _0 (fg)}(v_{fg})$, as required.
	The adjointness $\mathfrak{z} \varphi ^{a} \dashv \mathfrak{z} \varphi _* ^{a}$ follows by the fact that the order on $\mf{z}L(a)$ and $\mf{z}M(a)$ is given pointwise.
\end{remark}

Summing up our observations so far, we have shown that $\mf{z}$ defines a functor $[\ca{C}^{op},\nc{SupLat}]\to\nc{SupLat}[\ca{C}^{op},\nc{Set}]$ (the functoriality of $\mf{z}$ was also shown in \cite[Appendix]{towncah}).

\begin{theorem}\label{Thm: z left adjoint for suplattices}
	The functor $\mf{z}\colon[\ca{C}^{op},\nc{SupLat}]\to\nc{SupLat}[\ca{C}^{op},\nc{Set}]$ is left adjoint to the forgetful functor $U\colon \nc{SupLat}[\ca{C}^{op},\nc{Set}]\to[\ca{C}^{op},\nc{SupLat}]$.
\end{theorem}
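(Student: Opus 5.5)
We construct a unit and a counit and verify the triangle identities. The unit $\eta_L\colon L\to U\mf{z}L$ has components
\[
\eta_L^a(u)=(Lf(u))_{\vartheta_1 f=a}.
\]
This lies in $\mf{z}L(a)$, since $Lg(Lf(u))=L(fg)(u)$. Each $\eta_L^a$ preserves arbitrary suprema, because each $Lf$ is a suplattice map and suprema in $\mf{z}L(a)$ are componentwise. Naturality holds because for $r\colon b\to a$ both composites send $u$ to $(L(rh)(u))_{h}$. Naturality of $\eta$ in $L$ follows from the naturality of each $\varphi\colon L\to M$.

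The counit $\varepsilon_N\colon \mf{z}UN\to N$, for an internal suplattice $N$, is defined using the left adjoints $\Sigma^N_f$:
\[
\varepsilon_N^a\big((u_f)_{\vartheta_1 f=a}\big)=\bigvee_{\vartheta_1 f=a}\Sigma^N_f(u_f).
\]
We check the following in order.
\begin{enumerate}
\item Each $\varepsilon_N^a$ preserves suprema. This holds because each $\Sigma^N_f$ is a left adjoint and suprema in $\mf{z}UN(a)$ are computed componentwise.
\item $\varepsilon_N$ is natural. For $r\colon b\to a$ we need $Nr\big(\bigvee_f\Sigma_f u_f\big)=\bigvee_h\Sigma_h u_{rh}$. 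Since $Nr$ preserves suprema (it is a right adjoint between suplattices), it suffices to compute $Nr\,\Sigma_f$. Use Beck--Chevalley on the pullback $f\times_a r$ with projections $p\colon d\to\vartheta_0 f$ and $q\colon d\to b$. This gives $Nr\,\Sigma_f u_f=\Sigma_q Np(u_f)\le\Sigma_q u_{fp}$, and here $fp=rq$, so $u_{fp}=u_{rq}$. That yields the inequality $\le$. Conversely, each term $\Sigma_h u_{rh}$ is below $Nr\,\Sigma_{rh}u_{rh}$ by the unit of $\Sigma_r\dashv Nr$ together with $\Sigma_{rh}=\Sigma_r\Sigma_h$. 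That yields $\ge$.
\item $\varepsilon_N$ is internal, i.e.\ $\varepsilon^a\Sigma^{\mf{z}UN}_r=\Sigma^N_r\varepsilon^b$. Both sides equal $\bigvee_{g}\Sigma_{rg}u_g$, using $\Sigma_{rg}=\Sigma_r\Sigma_g$ and reindexing the supremum over pairs with $rg=h$.
\end{enumerate}
Naturality of $\varepsilon$ in internal maps $\psi\colon N\to N'$ uses exactly the commuting squares $\psi\Sigma^N_f=\Sigma^{N'}_f\psi$ that define internal morphisms.

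It remains to verify the triangle identities.
\begin{enumerate}
\item For $N$ internal, $U\varepsilon_N\circ\eta_{UN}$ sends $u\in Na$ to $\bigvee_f\Sigma_f Nf(u)$. This equals $u$: the identity term gives $\ge$, and the counits $\Sigma_fNf\le 1$ give $\le$.
\item For $L$ a presheaf of suplattices, $\varepsilon_{\mf{z}L}\circ\mf{z}\eta_L$ sends $(u_f)_f$ to $\bigvee_f\Sigma^{\mf{z}L}_f\big((L(g)u_f)_{g}\big)$. The $k$-component of this is $\bigvee_{fg=k}Lg(u_f)$. It contains the term $u_k$ (take $f=k$, $g=\id$), and every term is $\le u_{fg}=u_k$ by the defining condition of $\mf{z}L$. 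So it equals $(u_k)_k$.
\end{enumerate}

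The main obstacle is the naturality of the counit, since it is the only place where the Beck--Chevalley condition for $N$ is genuinely needed. With only weak pullbacks, it should be applied to a chosen weak pullback, and the argument is unchanged. Everything else is routine componentwise bookkeeping.
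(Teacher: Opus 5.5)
Your proof is correct and is essentially the paper's argument repackaged in unit--counit form. Your $\varepsilon_N$ is the paper's $\bar{\varphi}$ for $\varphi=\mathrm{id}_{UN}$ (the paper records this formula right after the proof), and its naturality and internality are checked with the same Beck--Chevalley and $\Sigma_{rg}=\Sigma_r\Sigma_g$ computations. Your first triangle identity is the paper's check that $\bar{\varphi}\eta_L=\varphi$, and your second is exactly the identity $(u_f)_f=\bigvee_g\Sigma^{\mf{z}L}_g\eta_L^{\vartheta_0 g}(u_g)$ that the paper uses for uniqueness.

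One small correction: $Nr$ preserves suprema because $UN$ is a presheaf of suplattices, not because it is a right adjoint, since being a right adjoint only gives preservation of infima.
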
 
\begin{proof} 
	The unit of the adjunction will be the morphism $\eta_{L}\colon L\to \mf{z}L$ defined for each $a\in\ca{C}$ and $x\in La$ by $\eta_{L}^{a}(x)\coloneqq (Lf(x))_{\vartheta_1f=a}$. Note that this is well-defined and natural in $a$ simply by functoriality of $L$, while it preserves suprema because the latter are computed componentwise in $\mf{z}L(a)$ and the transition maps $Lf$ are morphisms in $\nc{SupLat}$. In addition, for any natural transformation of presheaves of suplattices $\varphi\colon L\to M\in[\ca{C}^{op},\nc{SupLat}]$ and $a\in\ca{C}$, $x\in La$ we have that
	\begin{align*}
		\mf{z}\varphi^{a}(\eta_{L}^{a}(x))&=\mf{z}\varphi^{a}(Lf(x))_{\vartheta_1f=a} \\
		&=(\varphi^{\vartheta_0f}(Lf(x)))_{\vartheta_1f=a} \\
		& =(Mf(\varphi^{a}(x)))_{\vartheta_1f=a} \\
		&=\eta_{M}^{a}(\varphi^{a}(x)),
	\end{align*}
	showing that $\eta$ is natural in $L$.
	
	Now consider a morphism $\varphi\colon L\to M\in[\ca{C}^{op},\nc{SupLat}]$ with $M\in\nc{SupLat}[\ca{C}^{op},\nc{Set}]$ an internal suplattice. We want to show that $\phi$ extends uniquely along $\eta_L$ to a morphism $\bar{\varphi}\colon\mf{z}L\to M$ in $\nc{SupLat}[\ca{C}^{op},\nc{Set}]$. For each $a \in \mathcal{C}$, define 
	\[
	\textstyle\bar{ \varphi}^a(u_f)_{\vartheta_1f=a} =  \bigvee_{\theta_1 f =a} \Sigma^M_f \varphi ^{\vartheta_0 f} (u_f) .
	\]
	Concerning the naturality of $\bar{ \varphi}$ we have to show that, for $r\colon b \to a,$ the square
	\begin{displaymath}
		\xy \xymatrix{ \mathfrak{z}La \ar[d]_{\mathfrak{z}Lr } \ar[r]^{\bar{ \varphi} ^a} & Ma \ar[d]^{Mr}\\
			\mathfrak{z}Lb \ar[r]^{ \bar{ \varphi} ^b}	& Mb	}
		\endxy
	\end{displaymath} commutes.  We have that	
	\begin{equation}\label{eq:bar_phi_naturality_1}
		\textstyle \bar{ \varphi}^b  \mathfrak{z}Lr (u_f)_{\vartheta_1f=a} = \bar{\varphi}^b  ( u_{rg} )_{\vartheta_1g=b} = \bigvee_{\vartheta_1g=b} \Sigma ^M _g \varphi^{\vartheta _0 g} (u_{rg}),
	\end{equation}
	while 
	\begin{equation}\label{eq:bar_phi_naturality_2}
		\textstyle Mr \bar{ \varphi}^a (u_f )=Mr\left(\bigvee_{\vartheta_1f=a} \Sigma ^M_f \varphi^{\vartheta _0 f}(u_f)\right)= \bigvee_{\vartheta_1f=a} Mr\Sigma ^M_f \varphi^{\vartheta _0 f}(u_f).
	\end{equation}
	Since, for all $g$ with $\vartheta_1g=b$,
	$$\Sigma_g^M \varphi^{\vartheta _0 g}(u_{rg}) \leq Mr \Sigma^M _r \Sigma^M _g\varphi^{\vartheta _0 g}(u_{rg})= Mr \Sigma^M_{rg}\varphi^{\vartheta _0 g}(u_{rg}),$$ 
	and the composites $rg$ are among the arrows $f$ with codomain $a$, it follows that \cref{eq:bar_phi_naturality_1} is less than or equal to \cref{eq:bar_phi_naturality_2}. For the converse inequality, given any $f$ with $\vartheta_1f=a$, we apply the Beck-Chevalley condition for $M$ with respect to the pullback 
	\begin{displaymath}
		\xy \xymatrix{ \cdot \ar[d]_{k } \ar[r]^{ g} & b \ar[d]^{r}\\
			\cdot \ar[r]^{ f}	& a.	}
		\endxy
	\end{displaymath}
	to observe that $Mr\Sigma ^M_f \varphi^{\vartheta _0 f}(u_f)=\Sigma_g^M Mk\varphi^{\vartheta_0f}(u_f)=\Sigma_g^M\varphi^{\vartheta_0g}Lk(u_f)$.
	Since $u_{rg}=u_{fk}$ and $Lku_{f}\leq u_{fk}$, we have that \cref{eq:bar_phi_naturality_2} is also less or equal to \cref{eq:bar_phi_naturality_1}, so that the two are equal as desired.
	
	It is easy to see that each $\bar{\varphi}^{a}$ preserves suprema, since the $\Sigma_f^M$ and $\varphi^{\vartheta_0f}$ do so and because suprema in $\mf{z}L(a)$ are computed componentwise. The morphism $\bar{\varphi}\colon\mf{z}L\to M$ now also preserves suprema internally, i.e. for every $r\colon b\to a\in\ca{C}$ the following square commutes
	\begin{displaymath}
		\begin{tikzcd}
			\mf{z}L(b)\ar[r,"\Sigma^{\mf{z}L}_r"]\ar[d,"\bar{\varphi}^b"'] & \mf{z}L(a)\ar[d,"\bar{\varphi}^a"] \\
			Mb\ar[r,"\Sigma_r^M"] & Ma
		\end{tikzcd}
	\end{displaymath}
	Indeed, for any $(u_g)_{\vartheta_1g=b}\in\mf{z}L(b)$ we have that
	\begin{align*}
		\bar{\varphi}^a\Sigma_r^{\mf{z}L}(u_g)_{\vartheta_1g=b} &= \textstyle\bar{\varphi}^a\left(\bigvee_{rg=f}u_g\right)_{\vartheta_1f=a} \\ 
		& =\textstyle\bigvee_{\vartheta_1f=a}\Sigma_f^M\varphi^{\vartheta_0f}\left(\bigvee_{rg=f}u_g\right) \\ &=\textstyle\bigvee_{\vartheta_1f=a}\bigvee_{rg=f}\Sigma_f^M\varphi^{\vartheta_0g}(u_g) \\ 
		& =\textstyle\bigvee_{\vartheta_1f=a}\bigvee_{rg=f}\Sigma_r^M\Sigma_g^M\varphi^{\vartheta_0g}(u_g) \\
		&=\textstyle\Sigma_r^M\left(\bigvee_{\vartheta_1f=a}\bigvee_{rg=f}\Sigma_g^M\varphi^{\vartheta_0g}(u_g)\right)\\
		& =\textstyle\Sigma_r^M\left(\bigvee_{\vartheta_1g=b}\Sigma_g^M\varphi^{\vartheta_0g}(u_g)\right) =\Sigma_r^M\bar{\varphi}^b(u_g)_{\vartheta_1g=b}.
	\end{align*}
	
	This natural transformation $\bar{\varphi}$ extends $\varphi$ along $\eta$ because, for all $a$, 
	\[
	\textstyle\bar{\varphi}^a \eta_L ^a (u) = \bigvee_{\vartheta_1f=a}\Sigma_f^M\varphi^{\vartheta_0f}(Lf(u)) =\bigvee_{\vartheta_1f=a}\Sigma_f^M Mf(\varphi^a(u))
	\]
	and $\Sigma _f^M Mf(\varphi^a(u)) \leq \varphi^a(u)$, so that this supremum is less or equal to 
	$\varphi^a (u)$, while $\varphi^a(u)$ itself participates in the family over which the supremum is taken via $f=\id_a$, so that the two are equal. 
	
	Finally, let $\psi\colon\mf{z}L\to M\in\nc{SupLat}[\ca{C}^{op},\nc{Set}]$ be such that $\psi\eta_L=\varphi$. We will show that $\psi=\bar{\varphi}$. Consider any element $(u_f)_{\vartheta_1f=a}$ of $\mf{z}L(a)$. We begin by observing that 
	\begin{align*}
		\textstyle\bigvee_{\vartheta_1g=a}\Sigma_g^{\mf{z}L}\eta_L^{\vartheta_{0}g}(u_g) &=\textstyle\bigvee_{\vartheta_1g=a}\Sigma_g^{\mf{z}L}(Lh(u_g))_{\vartheta_1h=\vartheta_0g} \\ 
		& =\textstyle\bigvee_{\vartheta_1g=a}\left(\bigvee_{gh=f}Lh(u_g)\right)_{\vartheta_1f=a} \\
		&= \textstyle\left(\bigvee_{\vartheta_1g=a}\bigvee_{gh=f}Lh(u_g)\right)_{\vartheta_1f=a} =(u_f)_{\vartheta_1f=a}
	\end{align*} 
	because for every $f,g$ with $\vartheta_1f=\vartheta_1g=a$ and $h$ with $gh=f$ we have $Lh(u_g)\leq u_{gh}=u_f$, while $u_f$ participates in the supremum upon taking $g=f$ and $h=\mathrm{id}_{\vartheta_0f}$. Using this equality, we now have
	\begin{align*}
		\psi^a(u_f)_{\vartheta_1f=a} &= \textstyle\psi^a\left(\bigvee_{\vartheta_1g=a}\Sigma_g^{\mf{z}L}\eta_L^{\vartheta_0g}(u_g)\right) \\
		& =\textstyle\bigvee_{\vartheta_1g=a}\psi^a\Sigma_g^{\mf{z}L}\eta_L^{\vartheta_0g}(u_g) \\ &=\textstyle\bigvee_{\vartheta_1g=a}\Sigma_g^M\psi^{\vartheta_0g}\eta_L^{\vartheta_0g}(u_g) \\ 
		& =\textstyle\bigvee_{\vartheta_1g=a}\Sigma_g^M\phi^{\vartheta_0g}(u_g)
	\end{align*}
	so that indeed $\psi=\bar{\varphi}$.
\end{proof}
From the proof above, it is easy to see that the co-unit $\varepsilon \colon \mathfrak{z}L \to L$ of the adjunction is the suplattice morphism given, for each $a \in \mathcal{C}$, by the formula
\[
\textstyle\varepsilon_L^a (u_f )_{\vartheta_1f=a} = \bigvee_{\theta_1 f = a} \Sigma _f u_f .
\]
On the other hand, given an internal suplattice $L\in[\ca{C}^{op},\Set]$, the unit $\eta_L\colon L\to\mf{z}L$ will of course not be a suplattice morphism in general. 
\begin{remark}\label{rem:comparison_lax_to_nat}
	The construction giving this left adjoint has appeared independently in \cite{ht} 
	applied to presheaves with values in suitable $\nc{Pos}$-enriched categories,
	in connection with another universal property it enjoys: turning lax transformations between 
	$\nc{Pos}$-enriched functors universally into natural transformations.  But why does the same construction $\mf{z}$ satisfy two seemingly unconnected universal properties?  We briefly give some justification as to why we might expect $\mf{z}$ to satisfy both the universal `lax-to-natural' property identified in \cite{ht} as well as the universal property asserted in \cref{Thm: z left adjoint for suplattices}.  
	
	Let $L \in [\ca{C}^{op},\nc{SupLat}]$ be a presheaf of suplattices and $M \in \nc{SupLat}[\ca{C}^{op},\nc{Set}]$ an internal suplattice.  Given a natural transformation $\phi \colon L \to M$, for each arrow $r \colon b \to a$, by taking the \emph{mate} of the commuting square on the left, we obtain the \emph{lax} commuting square on the right:
	\[
	\begin{tikzcd}
		La \ar{r}{\phi^a} \ar{d}[']{Lr} & Ma \ar{d}{Mr} \\
		Lb \ar{r}[']{\phi^b} & Mb
	\end{tikzcd}
	\qquad
	\begin{tikzcd}
		La \ar{d}[']{Lr} \ar[phantom]{rd}[description]{\leq} & \ar{l}[']{\phi^a_*} Ma \ar{d}{Mr} \\
		Lb & \ar{l}{\phi^b_*} Mb,
	\end{tikzcd}
	\]
	by which we mean that there is an inequality $Lr \phi^a_* \leq \phi^b_* Mr$.  Thus, we obtain a \emph{lax natural transformation} $\phi_* \colon M \rightsquigarrow L$ in the sense of \cite[\S 3]{ht}.  Recall that each natural transformation $\phi $ uniquely extends to a natural transformation $\bar{\phi} \colon \mf{z}L \to M$ that moreover commutes with internal suprema in the sense that the square below on the left commutes, or equivalently, by taking right adjoints, that the square on the right commutes:
	\[
	\begin{tikzcd}
		\mf{z} La  \ar{r}{\bar{\phi}^a} & Ma  \\
		\mf{z} Lb \ar{u}{\Sigma^{\mf{z}L}_r} \ar{r}[']{\bar{\phi}^b} & Mb \ar{u}[']{\Sigma^M_r}
	\end{tikzcd}
	\qquad
	\begin{tikzcd}
		\mf{z} La \ar{d}[']{\mf{z}Lr} & \ar{l}[']{\bar{\phi}^a_*} Ma \ar{d}{Mr} \\
		\mf{z} Lb & \ar{l}{\bar{\phi}^b_*} Mb.
	\end{tikzcd}
	\]
	Thus, we have transported the lax natural transformation $\phi_* \colon M \rightsquigarrow L$ to the natural transformation $\bar{\phi}_* \colon M \to \mf{z} L$.  This describes the action on $\phi_*$ under the bijection between lax natural transformations $M \rightsquigarrow L$ and natural transformations $M \to \mf{z} L$ established in \cite{ht}.
\end{remark}
In the sequel we shall use extensively the construction $\mf{z}$, guided by the important observation in \cite[Example 4.2]{ht} that, given an internal poset in presheaves, equivalently a functor $Q \colon \mathcal{C}^{op} \to \nc{Pos}$, then the frame of internal ideals of $Q$ in the presheaf category $\hat{\mathcal{C}}$ has sections given, for $a \in \mathcal{C},$ as
\[\idl_{\hat{\mathcal{C}}}(Q)a = \mathfrak{z} (\idl(Qa)).\]
The same construction applies to give the internal power-object (\textit{ibid}., Example 4.1),
the internal object of down-closed objects etc., as we now explain, giving a more streamlined argument compared to \cite{ht}.
\begin{corollary}\label{coro:internal-power-object}
	The internal power-object $P_{\hat{\ca{C}}}X$ of a presheaf $X\in [\ca{C}^{op},\nc{Set}] = \hat{\ca{C}}$ is given by $\mathfrak{z} (P\circ X) \in [\ca{C}^{op},\nc{SupLat}]$, where $P$ denotes the (covariant) power-set functor.
\end{corollary}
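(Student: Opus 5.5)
The plan is to identify $P_{\hat{\ca{C}}}X$ by its universal property as the free internal suplattice on $X$, and to build that free object in two stages using \cref{Thm: z left adjoint for suplattices}. Internally, $P_{\hat{\ca{C}}}X$ is the free suplattice on the object $X$ (the singleton map $X\to P_{\hat{\ca{C}}}X$ is universal among maps into internal suplattices). So the left adjoint to the forgetful functor $\nc{SupLat}[\ca{C}^{op},\Set]\to[\ca{C}^{op},\Set]$ sends $X$ to $P_{\hat{\ca{C}}}X$. This forgetful functor factors as
\[
\nc{SupLat}[\ca{C}^{op},\Set]\xrightarrow{U}[\ca{C}^{op},\nc{SupLat}]\xrightarrow{V\circ -}[\ca{C}^{op},\Set],
\]
where $V\colon\nc{SupLat}\to\Set$ is the forgetful functor. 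Indeed, $U$ does not change the underlying presheaf of sets.

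Next I find a left adjoint for each factor. The functor $V\circ-$ has left adjoint $P\circ-$, by postcomposition with the adjunction $P\dashv V$: a 2-adjunction induces an adjunction on functor categories, with unit and counit applied pointwise. Concretely, a natural map $X\to VM$ corresponds pointwise to suplattice maps $PXa\to Ma$, given by $S\mapsto\bigvee_{x\in S}\xi_a(x)$. Naturality in $a$ transfers because $Mr$ preserves suprema and $P(Xr)$ is direct image. The functor $U$ has left adjoint $\mf{z}$ by \cref{Thm: z left adjoint for suplattices}. Composing the two adjunctions gives that $\mf{z}(P\circ X)$ is left adjoint to the composite forgetful functor at $X$. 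By uniqueness of left adjoints, it is isomorphic to $P_{\hat{\ca{C}}}X$ as an internal suplattice, and in particular as a presheaf.

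The main obstacle is justifying cleanly that $P_{\hat{\ca{C}}}X$ is the free internal suplattice on $X$ in the external, Joyal--Tierney description of internal suplattices used in the paper. I would argue this internally: in any topos, $\{\cdot\}\colon X\to PX$ is the free suplattice, with the extension $S\mapsto\bigvee\{\xi(x)\mid x\in S\}$. Internal suplattice morphisms correspond, under the Joyal--Tierney characterisation recalled at the start of \cref{sec:free_const}, to the presheaf maps commuting with the $\Sigma_f$. So the internal universal property is exactly the external one for the functor $V\circ U$. As a sanity check, I would verify the sections directly. By Yoneda, $P_{\hat{\ca{C}}}X(a)$ is the set of subpresheaves of $y a\times X$, that is, families $(S_f\subseteq X\vartheta_0 f)_{\vartheta_1f=a}$ with $Xg(S_f)\subseteq S_{fg}$. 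This is precisely the description of $\mf{z}(P\circ X)(a)$, with matching restriction maps $(S_f)\mapsto(S_{rh})$.
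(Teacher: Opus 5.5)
Your proposal is correct and follows the paper's proof: factor the forgetful functor through $[\ca{C}^{op},\nc{SupLat}]$ and compose $\mathfrak{z}\dashv U$ with the pointwise adjunction $P\circ-\dashv V\circ-$. Two of your steps go beyond what the paper writes: you justify that $P_{\hat{\ca{C}}}X$ is the free internal suplattice on $X$, which the paper takes as known, and you check the sections directly via subpresheaves of $ya\times X$.
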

\begin{proof}
	The adjunction $ P \dashv u \colon \nc{SupLat} \to \nc{Set}$, where $u \colon \nc{SupLat} \to \nc{Set}$ is the forgetful functor,
	induces another adjunction 	
	$$P \circ - \dashv u \circ - \colon [\ca{C}^{op},\nc{SupLat}] \to [\ca{C}^{op},\nc{Set}], $$
	which when composed with the adjunction 
	$$\mathfrak{z} \dashv U\colon \nc{SupLat}[\ca{C}^{op},\nc{Set}]\to[\ca{C}^{op},\nc{SupLat}]$$
	from \cref{Thm: z left adjoint for suplattices}
	gives us that the left adjoint to the forgetful functor
	$\nc{SupLat}[\ca{C}^{op},\nc{Set}]\to  [\ca{C}^{op},\nc{Set}]$, in other words the internal power-object, is $\mathfrak{z} (P\circ -)$. 
\end{proof}
\begin{example}\label{ex:comuting_subobj_class}
	In particular, taking $\mathbf{2} \colon \ca{C}^{op} \to \Frm$ to be the constant presheaf on the two-element frame $\{0 \leq 1\}$, it follows that $\mf{z}  \mathbf{2}$ is the subobject classifier $\Omega$ of $[\ca{C}^{op},\nc{Set}]$.  Indeed, for each object $a \in \ca{C}$, an isomorphism between $\mf{z} \mathbf{2}(a)$ and the usual description of $\Omega (a)$ in terms of sieves on the object $a$ (as found in, for instance, \cite[\S I.4]{sgl}) can be witnessed by sending $(u_f)_{\theta_1 f = a} \in \mf{z} \mathbf{2}(a)$ to the sieve $\{f \colon b \to a \mid u_f = 1\} \in \Omega (a)$.
\end{example}
\begin{corollary}\label{coro:internal-down-closed-object}
	The internal object of down-closed-subobjects $D_{\hat{\ca{C}}}X$ of an internal poset $X$ in $ [\ca{C}^{op},\nc{Set}]$ 
	is given by $\mathfrak{z} (D\circ X) \in [\ca{C}^{op},\nc{SupLat}].$ 
\end{corollary}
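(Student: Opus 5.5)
The plan is to imitate the proof of \cref{coro:internal-power-object}, composing two adjunctions, now with internal posets in place of presheaves of sets. First I would record the external description of the relevant category. An internal poset $X$ in $\hat{\ca{C}}$ is the same thing as a functor $X\colon\ca{C}^{op}\to\Pos$, and monotone internal maps are natural transformations, since the order relation $\leq\,\rightarrowtail X\times X$ is a subpresheaf and reflexivity, transitivity and antisymmetry may be checked sectionwise. Hence $\nc{Pos}[\ca{C}^{op},\Set]\simeq[\ca{C}^{op},\Pos]$.

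Next I would use the external adjunction $D\dashv u\colon\nc{SupLat}\to\Pos$. Here $D(Q)$ is the suplattice of down-sets of $Q$, $D$ acts on a monotone map $h$ by taking the down-closure of the direct image, and the unit is $x\mapsto{\downarrow}x$. Postcomposition gives $D\circ-\dashv u\circ-\colon[\ca{C}^{op},\nc{SupLat}]\to[\ca{C}^{op},\Pos]$. I need $D$ to be a genuine functor so that $D\circ X$ is a presheaf of suplattices. This holds because $D(h)$ preserves joins, which are unions, and because $D(hk)=D(h)D(k)$.

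Composing with $\mf{z}\dashv U$ from \cref{Thm: z left adjoint for suplattices} shows that $\mf{z}(D\circ-)$ is left adjoint to the forgetful functor $\nc{SupLat}[\ca{C}^{op},\Set]\to[\ca{C}^{op},\Pos]\simeq\nc{Pos}[\ca{C}^{op},\Set]$. This forgetful functor sends an internal suplattice to its underlying internal poset.

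It then remains to identify this left adjoint with $X\mapsto D_{\hat{\ca{C}}}X$. Internally in the topos, $D_{\hat{\ca{C}}}X$, together with the unit $x\mapsto{\downarrow}x$, is the free suplattice on the poset $X$. This is the constructive version of the classical fact: every down-set $S$ satisfies $S=\bigvee_{x\in S}{\downarrow}x$, and a monotone map $X\to M$ into a suplattice extends uniquely by $S\mapsto\bigvee_{x\in S}\varphi(x)$. The argument uses no excluded middle or choice, so it is valid in any topos. Externally, this says that $D_{\hat{\ca{C}}}$ is left adjoint to the forgetful functor from internal suplattices to internal posets. By uniqueness of left adjoints, $D_{\hat{\ca{C}}}X\cong\mf{z}(D\circ X)$, naturally in $X$.

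The main obstacle is the middle step, namely confirming that the forgetful functor obtained from the composite adjunction really is the internal forgetful functor under the identification of internal posets with $\Pos$-valued presheaves. In other words, the underlying sectionwise order of an internal suplattice must be the internal order. This holds because, in the Joyal--Tierney description, the internal order is computed sectionwise. As a sanity check I would verify that the unit $X\to\mf{z}(D\circ X)$ sends $x\in Xa$ to $({\downarrow}Xf(x))_{\vartheta_1f=a}$. This corresponds to the internal principal down-set of $x$, viewed as a compatible family of down-sets over the sieve of arrows into $a$, in the same way as the sieve description in \cref{ex:comuting_subobj_class}.
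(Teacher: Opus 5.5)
Your proposal is correct and follows the paper's own argument. You use the equivalence $\nc{Pos}[\ca{C}^{op},\Set]\simeq[\ca{C}^{op},\Pos]$ to compose $D\circ-\dashv u\circ-$ with $\mf{z}\dashv U$, and then identify the composite left adjoint with $D_{\hat{\ca{C}}}$ by uniqueness of left adjoints. Your extra checks — functoriality of $D$, matching of the forgetful functors, and the explicit unit $x\mapsto({\downarrow}Xf(x))_{\vartheta_1 f=a}$ — make explicit what the paper leaves to the phrase ``similar to'' the power-object corollary.
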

\begin{proof}
	Similar to \cref{coro:internal-power-object}, taking into account the equivalence
	$$\nc{Pos}[\ca{C}^{op},\nc{Set}] \simeq [\ca{C}^{op},\nc{Pos}]$$
	and the adjunction 
	$$D \dashv u \colon \nc{SupLat} \to \nc{Pos}$$
	between sup-lattices and posets. 
\end{proof}

\subsection{The free internal frame on a presheaf of frames.}
Next, we want to show that the adjunction established above restricts to one between frame-valued presheaves and internal frames in $[\ca{C}^{op},\nc{Set}]$.

First of all, if $L\in[\ca{C}^{op},\Frm]$, then it is easy to see that for every $a\in\ca{C}$ the suplattice $\mf{z}L(a)$ also has finite infima. In particular, given elements $(u_f)_{\vartheta_1f=a}$ and $(v_f)_{\vartheta_1f=a}$, the family $(u_f\wedge v_f)_{\vartheta_1f=a}$ is also an element of $\mf{z}L(a)$ because for any $g$ with $\vartheta_1g=\vartheta_0f$ we have $Lg(u_f\wedge v_f)\leq Lg(u_f)\wedge Lg(v_f)\leq u_{fg}\wedge v_{fg}$. Since both finite infima and arbitrary suprema are given componentwise, it immediately follows that $\mf{z}L(a)$ is a frame.

To check Frobenious reciprocity, consider an arrow $r\colon b\to a\in\ca{C}$ and elements $(u_f)_{\vartheta_1f=a}$ and $(v_g)_{\vartheta_1g=b}$ of $\mf{z}L(a)$ and $\mf{z}L(b)$ respectively. Then using the infinite distributive law which holds in the sections of $L$, we have that
\begin{align*}
	\Sigma_r^{\mf{z}L}\left(\mf{z}Lr(u_f)_{\vartheta_1f=a}\wedge (v_g)_{\vartheta_1g=b}\right) &= \Sigma_r^{\mf{z}L}\left((u_{rg})_{\vartheta_1g=b}\wedge (v_g)_{\vartheta_1g=b}\right) \\ 
	&=\Sigma_r^{\mf{z}L}(u_{rg}\wedge v_g)_{\vartheta_1g=b} \\
	&= \textstyle\left(\bigvee_{rg=f}(u_{rg}\wedge v_g)\right)_{\vartheta_1f=a} \\ 
	&=\textstyle\left(\bigvee_{rg=f}(u_f\wedge v_g)\right)_{\vartheta_1f=a} \\
	&= \textstyle\left(u_f\wedge \left(\bigvee_{rg=f}v_g\right)\right)_{\vartheta_1f=a} \\ 
	&=\textstyle(u_f)_{\vartheta_1f=a}\wedge \left(\bigvee_{rg=f}v_g\right)_{\vartheta_1g=b} \\
	&=\textstyle(u_f)_{\vartheta_1f=a}\wedge \Sigma_r^{\mf{z}L}(v_g)_{\vartheta_1g=b}.
\end{align*}

Finally, given any $\varphi\colon L\to M\in[\ca{C}^{op},\Frm]$, it is easy to see that each $\mf{z}\varphi^a$ preserves finite infima, simply because each $\varphi^a$ does and they are given componentwise in $\mf{z}L(a)$ and $\mf{z}M(a)$. Hence, $\mf{z}\varphi$ is a morphism in $\Frm[\ca{C}^{op},\Set]$, and so we have shown that $\mf{z}$ restricts also to a functor $[\ca{C}^{op},\Frm]\to\Frm[\ca{C}^{op},\Set]$. We can then state the main result of this section.
\begin{theorem}\label{Thm: z left adjoint for internal frames}
	The functor $\mf{z}\colon[\ca{C}^{op},\Frm]\to\Frm[\ca{C}^{op},\Set]$ is left adjoint to the forgetful $U\colon \Frm[\ca{C}^{op},\Set]\to[\ca{C}^{op},\Frm]$.
\end{theorem}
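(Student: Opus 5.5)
The plan is to reuse the suplattice adjunction of \cref{Thm: z left adjoint for suplattices} and check that all the data restrict to frames. We already know that $\mf{z}$ sends $[\ca{C}^{op},\Frm]$ into $\Frm[\ca{C}^{op},\Set]$ and frame maps to internal frame maps. It therefore suffices to verify three things.
\begin{enumerate}
\item The unit $\eta_L$ lies in $[\ca{C}^{op},\Frm]$.
\item For a frame map $\varphi\colon L\to UM$ with $M$ an internal frame, the extension $\bar\varphi^a(u_f)=\bigvee_{\vartheta_1f=a}\Sigma^M_f\varphi^{\vartheta_0f}(u_f)$ preserves finite infima.
\item Uniqueness of the extension among internal frame maps.
\end{enumerate}
Uniqueness is inherited directly from the suplattice case, since every internal frame morphism is in particular an internal suplattice morphism. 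Naturality of $\eta$ in $L$ and the identity $\bar\varphi\eta_L=\varphi$ are likewise already established. The first item is immediate: $\eta_L^a(x)=(Lf(x))_f$, the transition maps $Lf$ are frame homomorphisms, and finite meets in $\mf{z}L(a)$ are computed componentwise.

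The main work is the second item, that each $\bar\varphi^a$ preserves finite meets. For the top element, note that $1\in\mf{z}L(a)$ is the tuple $(1)_f$, and already the $f=\id_a$ term of the supremum gives $\varphi^a(1)=1$. For binary meets, $\bar\varphi^a(u\wedge v)\le\bar\varphi^a(u)\wedge\bar\varphi^a(v)$ holds by monotonicity. For the reverse inequality I will distribute using the frame law in $Ma$:
\[\textstyle\bar\varphi^a(u)\wedge\bar\varphi^a(v)=\bigvee_{f,g}\Sigma_f\varphi(u_f)\wedge\Sigma_g\varphi(v_g).\]
Fix $f\colon b\to a$ and $g\colon c\to a$, and take a (weak) pullback with legs $p\colon d\to b$, $q\colon d\to c$, so that $fp=gq=:h$. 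Frobenius gives
\[\Sigma_f\varphi(u_f)\wedge\Sigma_g\varphi(v_g)=\Sigma_f\bigl(\varphi(u_f)\wedge Mf\,\Sigma_g\varphi(v_g)\bigr).\]
Beck--Chevalley gives $Mf\,\Sigma_g=\Sigma_p Mq$, and naturality of $\varphi$ gives $Mq\,\varphi=\varphi\,Lq$. Applying Frobenius once more for $p$, the term becomes
\[\Sigma_f\Sigma_p\bigl(Mp\,\varphi(u_f)\wedge\varphi(Lq\,v_g)\bigr)=\Sigma_h\varphi^{d}\bigl(Lp(u_f)\wedge Lq(v_g)\bigr).\]
Here we use that $\varphi^d$ preserves binary meets and that $Mp\,\varphi=\varphi\,Lp$. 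The defining inequalities of $\mf{z}L$ give $Lp(u_f)\le u_{fp}=u_h$ and $Lq(v_g)\le v_{gq}=v_h$. Hence the term is at most $\Sigma_h\varphi^{\vartheta_0h}(u_h\wedge v_h)$, which is one of the summands of $\bar\varphi^a(u\wedge v)$. Taking the supremum over all $f,g$ yields the reverse inequality.

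I expect this meet-preservation computation to be the only genuine obstacle. The care needed lies in applying Beck--Chevalley with the correct orientation of the pullback square. If $\ca{C}$ only has weak pullbacks, Beck--Chevalley is required for those, so the argument goes through unchanged. With this in place, $\bar\varphi$ is a morphism of internal frames, and the adjunction restricts as claimed.
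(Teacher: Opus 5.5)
Your proof is correct and follows essentially the paper's route. You restrict the suplattice adjunction, observe that the unit is sectionwise a frame map, and prove meet-preservation via a (weak) pullback, Frobenius, Beck--Chevalley and the lax-compatibility inequalities $Lp(u_f)\le u_{fp}$. The paper does this computation for the counit $\varepsilon_M$, which is your $\bar\varphi$ with $\varphi=\id$, while you carry a general $\varphi$ through it; the two are equivalent since $\bar\varphi=\varepsilon_M\circ\mf{z}\varphi$.
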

\begin{proof}
	It suffices to show that the unit and co-unit of the adjunction at the level of suplattices restrict to the corresponding subcategories.
	
	If $L\colon\ca{C}^{op}\to\Frm$ is a presheaf of frames, then it is easy to see that each $\eta_L^a\colon La\to\mf{z}L(a)$ preserves finite infima, by virtue of them being given componentwise in $\mf{z}L(a)$ and the fact that all $Lf$ preserve them.
	
	For the co-unit $\varepsilon_L \colon \mathfrak{z}L \to L$, for any internal frame $L\in\Frm[\ca{C}^{op},\Set]$, we need to show that each $\varepsilon_L^a$ now also preserves finite infima.  We have to show that for every $(u_f)_{\vartheta_1f=a}$ and $(v_f)_{\vartheta_1f=a}$ in $\mf{z}L(a)$
	$$\textstyle\bigvee _{\theta_1 f = a} \Sigma _f u_f \wedge \bigvee _{\theta_1 f = a}  \Sigma _f v_f \leq \bigvee _{\theta_1 f = a} \Sigma _f (u_f \wedge v_f ),$$
	since the other inequality clearly holds. But
	$$\textstyle\bigvee _{\theta_1 f = a}  \Sigma _f u_f \wedge \bigvee _{\theta_1 f = a}  \Sigma _f v_f = \bigvee _{\theta_1 f = a}  \bigvee _{\theta_1 g = a}  (\Sigma _f u_f \wedge \Sigma _g v_g )$$
	for arrows $f,$ $g$ with common codomain $a\in \mathcal{C}.$ We show that each $\Sigma _f u_f \wedge \Sigma _g v_g$
	is dominated by $\Sigma _{fk} (u_{fk} \wedge v_{fk} )$, where $k$ is the map in the pullback
	\begin{displaymath}
		\xy \xymatrix{ \cdot \ar[d]_{k } \ar[r]^{h} & b \ar[d]^{g}\\
			\cdot \ar[r]^{ f}	& a.	}
		\endxy
	\end{displaymath}
	Indeed, we readily calculate that
	\begin{eqnarray*}
		\Sigma _{fk} (u_{fk} \wedge v_{fk} ) &\geq&
		\Sigma _{fk} (u_{fk} \wedge Lk v_{f} )   \\
		&=&\Sigma _{f} \Sigma _k (u_{fk} \wedge Lk v_{f} ) \\
		&=& \Sigma _{f}(\Sigma _k u_{fk} \wedge  v_{f}) \quad \mbox{(Frobenious reciprocity)}  \\
		&=& \Sigma _{f} (  \Sigma _k  u_{gh} \wedge  v_{f})\\
		&\geq& \Sigma _f (\Sigma _k Lh u_g \wedge  v_{f})\\
		&=&\Sigma _f (Lf \Sigma _g  u_g \wedge  v_{f}) \quad \mbox{(Beck-Chevalley)} \\
		&=& \Sigma _g  u_g \wedge \Sigma _f v_{f},
	\end{eqnarray*} 
	completing the proof.
\end{proof}
\begin{remark}
	\cref{Thm: z left adjoint for internal frames} recovers, as a special case, the free internal `difference' frame on a frame equipped with an endomorphism as constructed in \cite[Example 13.11]{tomasic}, i.e.\ when the category $\ca{C}$ is taken to be the monoid $(\mathbb{N},+)$ of natural numbers under addition.
\end{remark}
\begin{corollary}\label{coro:internal_ideals}
	The internal object of ideals $\idl_{\hat{\ca{C}}}L$ of an internal join-semilattice  $L\in [\ca{C}^{op},\nc{Set}]$ 
	is given by $\mathfrak{z} (\idl\circ L) \in \nc{Frm}[\ca{C}^{op},\nc{Set}].$ 
\end{corollary}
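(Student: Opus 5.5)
The plan is to argue exactly as in \cref{coro:internal-power-object} and \cref{coro:internal-down-closed-object}: exhibit the internal ideal object as the value of a left adjoint to a forgetful functor, and then factor that forgetful functor as a composite of two forgetful functors whose left adjoints are already known. Internally in $\hat{\ca{C}}$, the frame of ideals $\idl_{\hat{\ca{C}}}L$ of a join-semilattice $L$ is the free frame on $L$ regarded as a join-semilattice. Equivalently, $L\mapsto\idl_{\hat{\ca{C}}}L$ is left adjoint to the forgetful functor $\Frm[\ca{C}^{op},\Set]\to\jslat[\ca{C}^{op},\Set]$. This is the internal version of the classical fact that $\idl(-)\dashv u\colon\Frm\to\jslat$, which holds constructively and so remains valid in any topos.

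The first step is to identify internal join-semilattices with presheaves of join-semilattices. Finite joins form a finite-limit (essentially algebraic) structure, and finite limits in $\hat{\ca{C}}$ are computed pointwise. Hence there is an equivalence
\[
\jslat[\ca{C}^{op},\Set]\simeq[\ca{C}^{op},\jslat],
\]
just as for posets in \cref{coro:internal-down-closed-object}. Note that, unlike for suplattices, no adjointness or Beck--Chevalley conditions appear here.

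Next, postcomposition with the external adjunction $\idl\dashv u\colon\Frm\to\jslat$ gives an adjunction
\[
\idl\circ-\dashv u\circ-\colon[\ca{C}^{op},\Frm]\to[\ca{C}^{op},\jslat].
\]
Composing this with $\mf{z}\dashv U$ from \cref{Thm: z left adjoint for internal frames} shows that $\mf{z}(\idl\circ-)$ is left adjoint to the composite forgetful functor $\Frm[\ca{C}^{op},\Set]\to[\ca{C}^{op},\jslat]\simeq\jslat[\ca{C}^{op},\Set]$. Since left adjoints are unique up to isomorphism, we get $\idl_{\hat{\ca{C}}}L\cong\mf{z}(\idl\circ L)$.

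The only point needing care is the first one: that the internal ideal object is indeed the free internal frame on $L$, and that the forgetful functors involved really do compose as claimed. I would justify the first part by the internal validity of the constructive statement that $\idl$ gives the free frame on a join-semilattice (whose distributivity is needed for $\idl$ to land in frames). The second part follows because a frame's underlying join-semilattice is computed pointwise for internal frames in presheaves. I expect no real obstacle beyond this bookkeeping.
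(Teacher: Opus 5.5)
Your route is the paper's own: its one-line proof likewise composes $\idl\circ-\dashv u\circ-$ with $\mf{z}\dashv U$, transports along $\jslat[\ca{C}^{op},\Set]\simeq[\ca{C}^{op},\jslat]$, and concludes by uniqueness of left adjoints. However, the external adjunction you both invoke, $\idl\dashv u\colon\Frm\to\jslat$, does not exist. So your step ``$\idl_{\hat{\ca{C}}}L$ is the free frame on $L$ regarded as a join-semilattice'' is false, even when $L$ is distributive. The unit $x\mapsto{\downarrow}x$ preserves binary meets, and so does every frame map. Hence any finite-join-preserving map $L\to M$ into a frame that extends along the unit to a frame map $\idl L\to M$ must preserve binary meets. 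For a concrete failure, take $L=\mathbf{2}\times\mathbf{2}$ and the map $L\to\mathbf{2}$ sending $0$ to $0$ and every other element to $1$. It preserves finite joins but not the meet $a\wedge b=0$ of the two atoms, so it has no frame extension. Your parenthetical about distributivity does not rescue this: distributivity only makes $\idl L$ a frame; it does not give it the universal property relative to join-preserving maps. Without distributivity even the conclusion $\mf{z}(\idl\circ L)\in\Frm[\ca{C}^{op},\Set]$ fails, e.g.\ for $\ca{C}$ the terminal category and $L=M_3$.

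The repair keeps your architecture; you only need to pair the right categories. There are two options.

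(a) Replace $\Frm$ by $\nc{SupLat}$. The adjunction $\idl\dashv u\colon\nc{SupLat}\to\jslat$ does hold constructively for every join-semilattice: extend $\phi$ by $I\mapsto\bigvee\phi[I]$, uniquely because every ideal is the join of its principal ideals. Composing with \cref{Thm: z left adjoint for suplattices} gives $\idl_{\hat{\ca{C}}}L\cong\mf{z}(\idl\circ L)$ as internal suplattices, for any internal join-semilattice $L$. When $L$ is distributive, e.g.\ the underlying lattice of an internal frame (the only case used later in the paper), both sides are internal frames. A sectionwise order-isomorphism between them is then automatically a frame isomorphism.

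(b) Replace $\jslat$ by the category $\nc{DLat}$ of distributive lattices. Here $\idl\dashv u\colon\Frm\to\nc{DLat}$ is the constructively valid coherent-frame adjunction. Internal distributive lattices in $\hat{\ca{C}}$ are again just presheaves of distributive lattices, since this is an equational theory. Composing with \cref{Thm: z left adjoint for internal frames} yields the frame statement directly.

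Route (a) matches the generality in which \cref{coro:internal_ideals} is phrased and shows exactly where distributivity is needed. The rest of your bookkeeping (pointwise computation of underlying structure, uniqueness of left adjoints) is fine in either version.
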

\begin{proof}
	Similar \cref{coro:internal-power-object} and \cref{coro:internal-down-closed-object}, taking into account the equivalence
	$$\jslat[\ca{C}^{op},\nc{Set}] \simeq [\ca{C}^{op},\jslat]$$
	and the adjunction 
	$$\idl \dashv u \colon \nc{Frm} \to \jslat$$
	between frames and join-semilattices. 
\end{proof}
\begin{remark}\label{rem:comparision_excomp}
	We take a moment to comment on the similarity of the construction $\mf{z}$ and the \emph{existential completion} $(-)^\exists$ on a presheaf of frames described in \cite{wr_ex_comp} (a frame-theoretic variant of the existential completion for meet-semilattices found in \cite{trotta}).  Suppose, for the purposes of this remark, that $\ca{C}$ is a small category that has all finite limits.  Given a presheaf of frames $L$, the existential completion $(-)^\exists$ freely adds left adjoints $\Sigma_{\pi_b}$ (satisfying Frobenius and Beck-Chevalley, wherever relevant) to the maps $L\pi_b$, for all product projections $\pi_b \colon a \times b\to a$, and so it is no surprise that $L^\exists$ ought to be related to $\mf{z}$, which freely adds left adjoints to all maps.  Indeed, we can explicitly describe $L^\exists$ as a subpresheaf of $\mf{z} L$.  
	
	First, we recall the construction of $L^\exists$ given in \cite[\S 2.9]{wr_ex_comp}.  For an object $a \in \ca{C}$, $L^\exists a$ is the posetal reflection of the following preorder: elements are sets of pairs $\{(b_i,u_i) \mid i \in I\}$ where $b_i \in \ca{C}$ and $u_i \in L(a \times b_i)$ for all $i \in I$, while there is an inequality $\{(b_i,u_i) \mid i \in I\} \leq \{(c_j, v_j) \mid j \in J\}$ if, for each $i \in I$, there is a set of morphisms $\{r_k \colon a \times b_i  \to a \times c_{j_k}  \mid k \in K\}$ such that, for all $k \in K$, $j_k \in J$, the triangle
	\[
	\begin{tikzcd}
	& a \times c_{j_k} \ar{d}{\pi_{c_{j_k}}} \\
	a \times b_i \ar{ru}{r_k} \ar{r}{\pi_{b_i}} & a
\end{tikzcd}
	\]
	commutes, and $u_i \leq \bigvee_{k \in K} L(r_k)(v_{j_k})$.  That is to say, an element of $L^\exists a$ is an equivalence class of the set $\{(b_i,u_i) \mid i \in I\}$ for the equivalence relation $\leq \cap \geq$.
	
	We will demonstrate that $L^\exists$ is isomorphic to the subpresheaf of $\mf{z} L$ given by those those tuples $(u_f)_{\theta_1 f = a} \in \mf{z}L(a)$ satisfying $L(G_f)(u_{\pi_b}) = u_f$ for all $f$ with $\theta_1 f = a$, where here $G_f$ denotes the \emph{graph} of $f$, i.e.\ the unique morphism $G_f \colon b \to a \times b$ making the diagram below commute,
	\[
	\begin{tikzcd}
		& b \ar{ld}[']{f} \ar[equal]{rd} \ar[dashed]{d}{G_f} & \\
		a & \ar{l}[']{\pi_b} a \times b \ar{r}{\pi_a} & b.
	\end{tikzcd}
	\]
	The isomorphism is witnessed, on the one hand, by sending such a tuple to (the equivalence class of) the set $\{(c,u_{\pi_c}) \mid c \in \ca{C}\}$, while in the other direction we send a set $\{(b_i,v_i) \mid i \in I\}$ to the tuple $(\bigvee_{\pi_{b_i} g = f} L(g)(v_i))_{\theta_1 f = a}$.  To see why this defines an isomorphism, note firstly that $u_f = L(G_f)(u_{\pi_b}) \leq \bigvee_{\pi_c g = f } L(g)(u_{\pi_c})$, but also $\bigvee_{\pi_c g = f } L(g)(u_{\pi_c}) \leq u_f$ by lax-compatibility of the family $(u_f)_{\theta_1 f =a} \in \mf{z}L(a)$, and so we obtain one of the desired equalities $(u_f)_{\theta_1 = a} = (\bigvee_{\pi_{c} g = f} L(g)(u_{\pi_c}))_{\theta_1 f = a}$.  It remains to argue that the sets $\{(b_i,v_i) \mid i \in I\}$ and $\{(c,\bigvee_{\pi_{b_i} g = \pi_{c}} L(g)(v_i)) \mid c \in \ca{C}\}$ are equivalent.  It is easily shown that one inequality, $\{(b_i,v_i) \mid i \in I\} \leq \{(c,\bigvee_{\pi_{b_i} g = \pi_{c}} L(g)(v_i)) \mid c \in \ca{C}\}$, is witnessed by the identity on each $b_i$, while for the converse inequality it suffices to consider the set of arrows $\{g \colon a \times c \to a \times b_i \mid \pi_{b_i} g = \pi_c\}$.
\end{remark}
\begin{remark}[cf.\ Remark 2.8 \cite{wr_ex_comp}]
	We gave above an explicit construction of $\mf{z}$, and a direct proof that it defines the left adjoint to the forgetful functor $U \colon \Frm[\ca{C}^{op},\Set] \to [\ca{C}^{op},\Frm]$.  But how might we have arrived at this description organically?  We explain here how to obtain the construction $\mf{z}$ from the \emph{geometric completion on a doctrinal site} given in \cite{car,wr,wrphd}.
	
	We first observe that, given a presheaf of frames $L$, the data of the suprema in each section of $L$ can be captured by a certain Grothendieck topology.  Consider the \emph{Grothendieck construction} $\ca{C} \rtimes L$, i.e.\ the category whose objects are pairs $(a,u)$ of an object $a \in \ca{C}$ and an element $u \in La$, and whose arrows $f \colon (b,v) \to (a,u)$ are those arrows $f \colon b \to a$ such that $v \leq Lf(u)$.  The category $\ca{C} \rtimes L$ comes equipped with a canonical fibration $p \colon \ca{C}\rtimes L \to \ca{C}$, given by $(a,u) \mapsto a$, and since each section has a top element (and these are preserved by transition maps), there is a functor $t \colon \ca{C} \to \ca{C} \rtimes L$, given by $a \mapsto (a,1)$, which is right adjoint $p$.  Obverse that the category $\ca{C} \rtimes L$ can be equipped with a Grothendieck topology $J^L$ whose covering sieves are generated by families of the form
	\[\textstyle \left\{\id_a \colon (a,u_i) \to \left(a,\bigvee_{i \in I} u_i\right) \mid i \in I\right\}.\]
	(This is the same Grothendieck topology considered in \cite[Remark 2.12]{wr_ex_comp}.) Let $M$ be another presheaf of frames, and let $\phi \colon U \circ L \Rightarrow U \circ M$ be a natural transformation where each component $\phi^a$ is a meet-semilattice homomorphism, i.e.\ $\phi$ describes a natural transformation between {meet-semilattice} valued functors. Note that each $\phi^a$ also preserves suprema, and hence $\phi$ is a natural transformation of presheaves of frames, if and only if the corresponding morphism of fibrations $\tilde{\phi} \colon \ca{C} \rtimes L \to \ca{C} \rtimes M$, given by $(a,u) \mapsto (a,\phi^a(u))$, sends $J^L$-covering families to $J^M$-covering families. In other words, we are able to capture the data of suprema in $L$ and $M$ via Grothendieck topologies.
	
	It follows from \cite[Theorem IV.25]{wrphd} that, not only can $L$ be freely completed to an internal frame, but that free completion can be explicitly calculated by taking the subobject classifier of the corresponding sheaf topos $\Sh(\ca{C}\rtimes L,J^L)$ and precomposing with the opposite of $t$, i.e.\ the free completion is given by
	\[
	\Omega_{\Sh(\ca{C}\rtimes L,J^L)} \circ t^{op} \colon \ca{C}^{op} \to \Frm.
	\]
	Indeed, it is not too hard to massage the description of $\Omega_{\Sh(\ca{C}\rtimes L,J^L)} \circ t^{op}$, in terms of a $J^L$-closed sieves, in order to witness a natural isomorphism $\mf{z} L \cong \Omega_{\Sh(\ca{C}\rtimes L,J^L)} \circ t^{op}$ --  an element $R \in \Omega_{\Sh(\ca{C}\rtimes L,J^L)} \circ t(a)$, i.e.\ a $J^L$-closed sieve on the object $(a,1)$, is sent to the tuple 
	\[(\bigvee \{v \in Lb \mid f \colon (b,v) \to (a,1) \in R\})_{\theta_1 f = a} \in \mf{z}L(a),\]
	while in the other direction an element $(u_f)_{\theta_1 f = a} \in \mf{z}L(a)$ is sent to the sieve $\{f \colon (b,v) \to (a,1) \mid v \leq u_f\}$.  We omit the details that this does in fact yield a natural isomorphism.
\end{remark}
\paragraph{Presheaves on a groupoid.}
For an $L\in\Frm[\ca{C}^{op},\Set]$, the unit $\eta_L\colon L\to\mf{z}L$ need not be a frame morphism since, as we commented on earlier, it need not be a suplattice homomorphism. The latter requirement says that for all $r \colon a \to b\in\ca{C}$ the square
\begin{center}
	\begin{tikzcd}
		La\ar[d,"\Sigma_r^L"']\ar[r,"\eta_a"] & \mf{z}La\ar[d,"\Sigma_{r}^{\mf{z}L}"] \\
		Lb\ar[r,"\eta_b"] & \mf{z}Lb
	\end{tikzcd}
\end{center}
commutes. Explicitly, this means that, for all $u\in La,$ 
\begin{equation}\label{eq:grpd_cond}
\textstyle	Lf \Sigma _r u  = \bigvee _{ rg=f} Lgu 
\end{equation}
This being the case for all $L$ forces the topos $[\ca{C}^{op},\Set]$ to be Boolean.

\begin{proposition}\label{Groupoid characterization}
	For a small category $\ca{C}$ (with weak pullbacks), the following are equivalent:
	\begin{enumerate}[label = {(\arabic*)}]
		\item\label{enum:C_grpd} $\ca{C}$ is a groupoid;
		\item\label{enum:unit_morphism} For every internal frame $L\in\Frm[\ca{C}^{op},\nc{Set}]$, the unit $\eta_L\colon L\to\mf{z}L$ is a morphism of frames;
		\item\label{enum:unit_iso} For every presheaf of frames $L\in[\ca{C}^{op},\Frm]$, the unit $\eta_L\colon L\to\mf{z}L$ is an isomorphism;
		\item\label{enum:U_ff} The forgetful functor $U \colon \Frm[\ca{C}^{op},\Set] \to [\ca{C}^{op},\Frm]$ is fully faithful;
		\item\label{enum:U_eqv} The forgetful functor $U \colon \Frm[\ca{C}^{op},\Set] \to [\ca{C}^{op},\Frm]$ is an equivalence of categories.
	\end{enumerate}
\end{proposition}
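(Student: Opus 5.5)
We will prove the cycle (1)$\Rightarrow$(3)$\Rightarrow$(5)$\Rightarrow$(4)$\Rightarrow$(2)$\Rightarrow$(1), most of which is formal given \cref{Thm: z left adjoint for internal frames}.

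For (1)$\Rightarrow$(3), assume $\ca{C}$ is a groupoid and let $L\in[\ca{C}^{op},\Frm]$. An element $(u_f)_{\vartheta_1 f=a}\in\mf{z}L(a)$ satisfies $L(g)(u_f)\leq u_{fg}$ for all composable $g$. Apply this with $g=f^{-1}$ to get $L(f^{-1})(u_f)\leq u_{\id_a}$, hence $u_f\leq L(f)(u_{\id_a})$. The reverse inequality $L(f)(u_{\id_a})\leq u_f$ also holds, so $u_f=L(f)(u_{\id_a})$. Thus every tuple is $\eta_L^a(u_{\id_a})$, and $\eta_L^a$ is injective because its $\id_a$-component is the identity. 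So $\eta_L$ is an isomorphism.

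For (3)$\Rightarrow$(5), note that $U\mf{z}L\cong L$ via $\eta_L$, so $U$ is essentially surjective. Since $\eta$ is invertible, the left adjoint $\mf{z}$ is fully faithful. For an internal frame $M$, the triangle identity $U\varepsilon_M\circ\eta_{UM}=\id$ shows that $U\varepsilon_M$ is an isomorphism. Hence $\varepsilon_M$ is a bijective morphism of internal frames, and therefore an isomorphism. So the adjunction is an adjoint equivalence.

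The step (5)$\Rightarrow$(4) is trivial. For (4)$\Rightarrow$(2), if $U$ is fully faithful then $\eta_{UM}$ is inverse to $U\varepsilon_M$ by the standard adjunction fact. Hence $\eta_{UM}$ is the underlying map of the internal frame isomorphism $\varepsilon_M^{-1}$, and so it is a morphism of internal frames.

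The main obstacle is (2)$\Rightarrow$(1): we must find a single internal frame $L$ for which condition \eqref{eq:grpd_cond} fails whenever $\ca{C}$ has a non-invertible arrow. The natural choice is $L=\Omega=\mf{z}\mathbf{2}$ from \cref{ex:comuting_subobj_class}, the frame of sieves, with $\Sigma_r$ taken as the image (existential) along $r$. Take $r\colon a\to b$, $u=\top$ (the maximal sieve on $a$), and $f=\id_b$ in \eqref{eq:grpd_cond}.
\begin{itemize}
\item The left side $\Sigma_r\top$ is the sieve on $b$ generated by $r$.
\item The right side is $\bigvee_{rg=\id_b} L(g)(\top)$, which is $\top$ if $r$ has a section and $\bot$ otherwise.
\end{itemize}
The left side is never $\bot$, since it contains $r$. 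So if $r$ has no section we get a contradiction. Hence every arrow is a split epimorphism.

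It remains to show that every split epimorphism is invertible. Given $rs=\id$, we check equation \eqref{eq:grpd_cond} for other $f$, in particular $f=r$, or we check it for $s$. Since $s$ itself must be split epi, some $t$ satisfies $st=\id$. Then $r=r(st)=(rs)t=t$, so $s$ is a two-sided inverse of $r$ and $r$ is an isomorphism. Hence $\ca{C}$ is a groupoid.

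The only delicate point is computing $\Sigma_r$ on $\Omega$ correctly in sieve terms. We will do this using the formula $\Sigma^{\mf{z}\mathbf{2}}_r$ from the construction of $\mf{z}$: the component at $g$ is $\bigvee_{rh=g}u_h$. With $u=\top$, the component at $\id_b$ is $1$ exactly when some $h$ satisfies $rh=\id_b$. Comparing with $\eta$ then gives the split-epi conclusion directly.
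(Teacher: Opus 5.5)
Your proof is correct, but its decomposition differs from the paper's in two places.

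\textbf{Leaving (1).} The paper proves (1)$\Rightarrow$(5) by hand, without using $\mf{z}$. On a groupoid, every presheaf of frames is already an internal frame with $\Sigma_f=(Lf)^{-1}$; Beck--Chevalley and Frobenius follow from functoriality, and every natural transformation commutes with these $\Sigma_f$. You instead prove (1)$\Rightarrow$(3) by computing $\mf{z}L$ directly: lax compatibility with $g=f^{-1}$ forces $u_f=L(f)(u_{\id_a})$. You then let the adjunction do the work in (3)$\Rightarrow$(5). This costs only the observation that $U$ reflects isomorphisms, and it spares you any Beck--Chevalley or Frobenius verification. Every link of your cycle is a genuine formal consequence. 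Note that a fully faithful right adjoint only yields an invertible counit, not an invertible unit, so closing the cycle through (4)$\Rightarrow$(2), as you do, is the right choice.

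\textbf{Returning to (1).} For (2)$\Rightarrow$(1) you use the same idea as the paper: evaluate \eqref{eq:grpd_cond} at $u=\top$ and read off factorisations. The difference is the test object.
\begin{itemize}
\item The paper uses the constant presheaf $\mathbf{2}$. There $\Sigma_r$ and $Lf$ are identities, so the left side is just $1$. One gets at once that any two arrows with a common codomain factor through each other. However, constant $\mathbf{2}$ is an internal frame only because every cospan in $\ca{C}$ can be completed to a commuting square, which weak pullbacks guarantee here.
\item You use $\Omega=\mf{z}\mathbf{2}$, which is an internal frame in any topos, so no such hypothesis is needed. The price is computing $\Sigma_r\top$ as the principal sieve on $r$.
\end{itemize}
You also spell out the step from ``every arrow is split epi'' to ``groupoid'' ($r=r(st)=(rs)t=t$), which the paper leaves implicit.

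\textbf{One imprecision.} In your final paragraph, comparing the two sides at the component $\id_b$ gives no information, since both are $1$ exactly when $r$ has a section. The decisive component is $r$ itself: there the left side is $1$, while the right side is $0$ unless $r$ is split. Your bullet-point argument already says this correctly.
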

\begin{proof}
	Note that the implications
	\[
	\text{\cref{enum:U_eqv}} \implies \text{\cref{enum:U_ff}} \iff \text{\cref{enum:unit_iso}} \implies \text{\cref{enum:unit_morphism}}
	\]
	are all trivial consequences, either by definition or by well-known categorical arguments.  For the implication \cref{enum:C_grpd}$\implies$\cref{enum:U_eqv}, if $\ca{C}$ is a groupoid, then every presheaf of frames $L \colon \ca{C}^{op} \to \Frm$ has left adjoints to the transition maps $Lf$, since these are just given by $Lf^{-1}$.  Moreover, if
	\[
	\begin{tikzcd}
		a \ar{r}{f} \ar{d}[']{g} & b \ar{d}{h} \\
		c \ar{r}{k} & d
	\end{tikzcd}
	\]
	is a commuting square, then by functoriality of $L$ we have that $Lh Lf = Lk Lg$, and so the Beck-Chevalley condition $\Sigma_k Lh = (Lk)^{-1} Lh = Lg (Lf)^{-1} = Lg \Sigma_f$ is satisfied.  Checking Frobenius reciprocity is similarly a consequence of the fact that $\Sigma_f$ is given by $Lf^{-1}$.  Thus, every presheaf of frames is an internal frame, and it is just as easy to check that every natural transformation yields a morphism of internal frames.
	
	It remains to prove the implication \cref{enum:unit_morphism}$\implies$\cref{enum:C_grpd}. Take $L$ to be the constant presheaf $L\colon\ca{C}^{op}\to\Frm$ with value $\mathbf{2}=\{0\leq 1\}$. It is easy to see that this is an internal frame in $[\ca{C}^{op},\Set]$. Since $\eta_L$ is an internal frame homomorphism, for any two morphisms $f,r$ in $\ca{C}$ with $\vartheta_1f=\vartheta_1r$ the condition \cref{eq:grpd_cond} above must hold. Taking $u=1$ we have that the left hand side is $1$, since $Lf$ and $\Sigma_r$ are both the identity maps. Thus, the supremum on the right hand side must be nonempty, showing that $f$ must factor through $r$. In other words, any two morphisms with common codomain must factor through each other, which implies that $\ca{C}$ is a groupoid.
\end{proof}

\section{Applications}
With the construction $\mf{z}$ in hand, we now turn to a study of the interaction of properties of an internal locale and properties on their sections.  
\subsection{Local compactness}
An internal locale $L$ is (internally) locally compact if there exists a map $\Lambda \colon L \to \idl_{\hat{\ca{C}}}L$ which is left adjoint to the \emph{supremum map}, which we denote $S \colon\idl_{\hat{\ca{C}}}{L} \to L$.  We show that an internal locale that is locally compact has locally compact sections, and moreover transition maps preserve the way-below relation, but we also show that the internal way-below relation is not given pointwise.

Before continuing, note that an adjunction of maps (i.e.\ natural transformations) between internal posets can be entirely checked by passing to sections, i.e.\ an adjunction $\phi \dashv \psi \colon X \to Y \in \Pos[\ca{C}^{op},\Set]$ gives, for all $a \in \ca{C}$, an adjunction $\phi_a \dashv \psi_a \colon Xa \to Ya$, and vice versa.

First, we explicitly describe the internal supremum map $S \colon \idl_{\hat{\ca{C}}} L \to L$, using the fact that $\idl_{\hat{\ca{C}}}L = \mf{z}(\idl \circ L)$.  The map $S$ is left adjoint to the canonical inclusion $\zeta \colon L \hookrightarrow \idl_{\hat{\ca{C}}}L$, which is given by $\zeta_a(u) = (\downarrow\!Lf(u))_{\theta_1 f = a} \in \mf{z}(\idl \circ L)(a)$.  Hence, the component of $S$ at $a \in \ca{C}$ must be given by $S_a\left((I_f)_{\theta_1 f = a}\right) = \bigvee \{\bigcup_{\theta_1 f = a} \Sigma_f[I_f]\}$ (where we recall our convention that $\Sigma_f[I_f]$ denotes the ideal generated by the direct image of $I_f$ under $\Sigma_f$) since, given $u \in La$, by a direct calculation we have that:
\begin{align*}
	S_a\left((I_f)_{\theta_1 f = a}\right) \leq u & \Leftrightarrow \forall f \, \forall v \in I_f \, (\Sigma_f v \leq u), \\
	& \Leftrightarrow \forall f \, \forall v \in I_f \, (v \leq Lf(u)), \\
	& \Leftrightarrow \forall f \, (I_f \subseteq \, \downarrow\! Lf(u)), \\
	& \Leftrightarrow (I_f)_{\theta_1 f = a} \leq \zeta_a(u).
\end{align*}

\begin{proposition}\label{Internal LC implies sectionwise LC}
If $L \in \Frm[\ca{C}^{op}, \Set]$ is locally compact, then all its sections are locally compact.
\end{proposition}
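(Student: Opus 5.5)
We plan to build, for each $a\in\ca{C}$, a left adjoint $\lambda_a\colon La\to\idl(La)$ to the ordinary supremum map $\bigvee\colon\idl(La)\to La$. By the characterisation recalled in the preliminaries, this is exactly local compactness of $La$. We take it as given that $\Lambda\dashv S$ internally, and that this adjunction holds sectionwise: $\Lambda_a\dashv S_a\colon \mf{z}(\idl\circ L)(a)\to La$. The plan is to compose this with a second adjunction, relating $\mf{z}(\idl\circ L)(a)$ to $\idl(La)$ through the component at $f=\id_a$.

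\textbf{Step 1: the projection and its adjoints.} Consider the projection $\pi_a\colon\mf{z}(\idl\circ L)(a)\to\idl(La)$, $(I_f)_{\theta_1 f=a}\mapsto I_{\id_a}$. It preserves all suprema and infima, since both are computed componentwise. Its right adjoint is the counit-type map
\[\rho_a(J)=\Bigl(\bigl\{v\in L(\theta_0 f)\mid \Sigma_f v\in J\bigr\}\Bigr)_{\theta_1 f=a}.\]
This is an ideal in each component because $\Sigma_f$ preserves finite joins and $J$ is down-closed. The lax compatibility $L(g)[I_f]\subseteq I_{fg}$ follows from $\Sigma_{fg}Lg(v)=\Sigma_f\Sigma_gLg(v)\le\Sigma_f v$. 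We will not actually need $\rho_a$; what matters is the left adjoint of $\pi_a$. It is given by the unit $\eta^a_{\idl\circ L}(J)=(Lf[J])_{\theta_1 f=a}$. Indeed, $\eta(J)\le (I_f)$ iff $Lf[J]\subseteq I_f$ for all $f$, iff $J\subseteq I_{\id_a}$, using lax compatibility $Lf[I_{\id}]\subseteq I_f$.

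\textbf{Step 2: factor the section supremum.} We check that $S_a\circ\eta^a(J)=\bigvee J$ for $J\in\idl(La)$. By the explicit formula, $S_a(\eta(J))=\bigvee\bigcup_f\Sigma_f[Lf[J]]$. Each $\Sigma_fLf(u)\le u$, and the $f=\id_a$ term gives $J$ itself, so the result is $\bigvee J$. Hence $\bigvee_{La}=S_a\circ\eta^a$, a composite of two maps that each have left adjoints? No: $\eta^a$ is itself a \emph{left} adjoint, so this factorization is the wrong way round to compose left adjoints directly. We fix this as follows. Define
\[\lambda_a:=\pi_a\circ\Lambda_a,\qquad\text{i.e.}\quad \lambda_a(u)=\Lambda_a(u)_{\id_a}.\]
We then verify $\lambda_a\dashv\bigvee$ directly.

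\textbf{Step 3: verify the adjunction.} This is the main obstacle. One direction: $\lambda_a(u)\subseteq J$ should imply $u\le\bigvee J$. From $\Lambda_a\dashv S_a$ we have $u\le S_a\Lambda_a(u)=\bigvee\bigcup_f\Sigma_f[\Lambda_a(u)_f]$, so we need to control the components with $f\ne\id_a$. The key claim is that $\Sigma_f[\Lambda_a(u)_f]\subseteq\Lambda_a(u)_{\id_a}$ up to ideal generation, i.e.\ $\Lambda_a(u)$ is in the image of $\rho_a$. To get this, we would use that $\Lambda$ preserves internal suprema, being a left adjoint internally and hence commuting with $\Sigma$. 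Alternatively, we would use minimality: the tuple $\Lambda_a(u)\wedge\rho_a(\pi_a\Lambda_a(u))$ still satisfies $u\le S_a(-)$, because $S_a\rho_a(K)=\bigvee K$. So by the universal property of $\Lambda_a(u)$ as least element with $u\le S_a(-)$, we get $\Lambda_a(u)\le\rho_a\pi_a\Lambda_a(u)$. That inequality gives $S_a\Lambda_a(u)\le\bigvee\lambda_a(u)$, hence $u\le\bigvee\lambda_a(u)$, and monotonicity yields the first direction. Conversely, if $u\le\bigvee J$, then $u\le S_a(\eta^a J)$, so $\Lambda_a(u)\le\eta^a(J)$, and taking the $\id_a$ component gives $\lambda_a(u)\subseteq J$.

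The delicate part is the second argument of Step 3. There we must confirm that $\Lambda_a(u)\wedge\rho_a(\cdot)$ is a legitimate element with $u\le S_a$ of it. This needs $S_a$ to preserve that binary meet, or else we must argue via $\Lambda_a(u)\le\rho_a\bigl(\bigvee$-closure$\bigr)$ by applying the adjunction to $\rho_a(\downarrow S_a\Lambda_a u)$. We would try the latter first: $u\le S_a\rho_a(\downarrow u)$, so $\Lambda_a(u)\le\rho_a(\downarrow u)$, whose $\id_a$-component is $\downarrow u$. That by itself is not enough, so the minimality argument with ideals $K$ generated by $\lambda_a(u)$ is what would need careful checking.
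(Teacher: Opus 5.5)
There is a genuine gap, and it lies in the choice $\lambda_a(u)=\Lambda_a(u)_{\id_a}$. This is in general \emph{not} the way-below segment of $u$ in $La$, so the first direction of your Step 3 is false, not merely delicate. By lax compatibility, $w\in\Lambda_a(u)_{\id_a}$ iff $(\downarrow\! Lf(w))_{\vartheta_1 f=a}\le\Lambda_a(u)$. So your $\lambda_a(u)$ is the $a$-section of the \emph{internal} way-below relation below $u$, and this can be strictly smaller than the way-below segment in $La$.

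Concretely, take $\Omega$ in $[\mathbf{2}^{op},\Set]$, with $\Omega_0=\{0\le 1\}$, $\Omega_1=\{0\le\#\le 1\}$, $\Omega(\le)(\#)=1$ and $\Sigma_{\le}(1)=\#$. An internal ideal $(I_\le,I_{\id_1})$ with $\#\le S_1(I_\le,I_{\id_1})$ must have $1\in I_\le$, either directly or via lax compatibility if $\#\in I_{\id_1}$. Hence $\Lambda_1(\#)=(\{0,1\},\{0\})$ and your $\lambda_1(\#)=\{0\}$. Thus $\lambda_1(\#)\subseteq\{0\}$ although $\#\not\le\bigvee\{0\}=0$.

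The same example refutes each of your proposed justifications:
\begin{itemize}
\item $\Sigma_{\le}[\Lambda_1(\#)_{\le}]=\{0,\#\}\not\subseteq\Lambda_1(\#)_{\id_1}$.
\item The meet $\Lambda_1(\#)\wedge\rho_1(\{0\})=(\{0\},\{0\})$ has $S_1$-value $0$, so $S_a$ does not preserve it and the minimality argument yields nothing.
\item $\Lambda$ does commute with the $\Sigma$'s, but that only gives $\Lambda_1(\#)=\Sigma_{\le}^{\mf{z}(\mathrm{idl}\circ\Omega)}\Lambda_0(1)$, whose $\id_1$-component is the empty join $\{0\}$.
\end{itemize}

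The trouble originates in Step 1. There $\rho_a$ is \emph{not} right adjoint to $\pi_a$: the family $(\{0,1\},\{0\})$ has $\pi_1$-image $\{0\}$ but is not below $\rho_1(\{0\})=(\{0\},\{0\})$. In fact $\rho_a=\eta^a$, since $\Sigma_f v\in J$ iff $v\le Lf(w)$ for some $w\in J$, i.e.\ $\{v\mid\Sigma_f v\in J\}=Lf[J]$.

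So your factorisation $\bigvee_{La}=S_a\circ\eta^a$ is exactly right, but $\eta^a$ is also a \emph{right} adjoint. By the same equivalence $\Sigma_f[I_f]\subseteq J\iff I_f\subseteq Lf[J]$, its left adjoint is
\[\kappa_a\bigl((I_f)_{\vartheta_1 f=a}\bigr)=\Bigl\langle{\textstyle\bigcup_{\vartheta_1 f=a}}\Sigma_f[I_f]\Bigr\rangle .\]
Composing $\kappa_a\dashv\eta^a$ with $\Lambda_a\dashv S_a$ gives $\lambda_a:=\kappa_a\Lambda_a\dashv S_a\eta^a=\bigvee$, i.e.\ $\lambda_a(u)=\langle\bigcup_f\Sigma_f[(\Lambda_a u)_f]\rangle$. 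This is precisely the paper's definition and its chain of equivalences.

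The missing idea is that every component of $\Lambda_a(u)$ must be pushed forward along $\Sigma_f$ and collected, not only the $\id_a$-component. In the example this gives $\langle\{0,\#\}\cup\{0\}\rangle=\{0,\#\}$, the correct way-below segment of $\#$.
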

\begin{proof}
We define a left adjoint $\lambda _a \colon La \to \idl (La)$ to the supremum map $\bigvee \colon \idl(La) \to La$ (of the section)
by  $\lambda _a (u) = \langle \bigcup_{\theta_1 f = a} \Sigma_f [(\Lambda _a u)_f]  \rangle$.
Indeed, we have that
\begin{eqnarray*}
\textstyle \lambda _a (u) = \langle \bigcup _{\theta_1 f = a} \Sigma _f [(\Lambda _a u)_f] \rangle \subseteq I &\Leftrightarrow &
\forall f  \; (\Sigma _f [(\Lambda _a u)_f] \subseteq I)  \\
&\Leftrightarrow & \forall f  \; ((\Lambda _a u)_f \subseteq  Lf[I])\\
&\Leftrightarrow &  \Lambda _a (u) \leq (  Lf[I])_{\theta_1 f = a} \\
&\Leftrightarrow & u \leq S_a (( Lf[I] )_{\theta_1 f = a}).
\end{eqnarray*} 
Finally, we note that $S_a (( Lf[I] )_{\theta_1 f = a}) = \bigvee \{\bigcup_{\theta_1 f = a} \Sigma_fLf[I]\} = \bigvee I$, since $I \subseteq \bigcup_{\theta_1 f = a} \Sigma_fLf[I]$ and $\bigcup_{\theta_1 f = a} \Sigma_fLf[I] \subseteq I$, where the latter follows from the fact that $\Sigma_f$ is left adjoint to $L_f$.  Hence, $\lambda_a$ does define a left adjoint.  In other words, we have shown that $\langle \bigcup _{\theta_1 f = a} \Sigma _f [(\Lambda _a u)_f] \rangle$ defines the way-below segment of the element $u \in La.$
\end{proof}

\begin{proposition}\label{Internal LC implies preservation of way-below}
If $L \in \Frm[\ca{C}^{op}, \Set]$ is locally compact, then all the transitions $Lg \colon La \to Lb,$ 
for $g\colon b\to a,$ preserve the way-below relation of the sections, i.e if $w\ll u \in La,$ then $Lg(w) \ll Lg(u)$ in $Lb.$
\end{proposition}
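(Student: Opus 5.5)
We will use the description of the way-below segment from \cref{Internal LC implies sectionwise LC}: for $u\in La$, $\twoheaddownarrow u=\lambda_a(u)=\langle \bigcup_{\theta_1 f=a}\Sigma_f[(\Lambda_a u)_f]\rangle$. We will combine it with the naturality of $\Lambda\colon L\to \idl_{\hat{\ca{C}}}L=\mf{z}(\idl\circ L)$. Naturality says that for $g\colon b\to a$ we have $\Lambda_b(Lg(u))=\mf{z}(\idl\circ L)(g)(\Lambda_a u)$. By the definition of the transition maps of $\mf{z}$, this means $(\Lambda_b Lg(u))_h=(\Lambda_a u)_{gh}$ for every $h$ with $\theta_1 h=b$. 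The goal is to show $Lg[\lambda_a(u)]\subseteq \lambda_b(Lg(u))$. Since $Lg$ preserves finite joins, it suffices to check this on the generators $\Sigma_f v$ with $v\in(\Lambda_a u)_f$.

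Fix $f\colon c\to a$ and $v\in(\Lambda_a u)_f$. We need to show $Lg(\Sigma_f v)\in\lambda_b(Lg u)$. Take the (weak) pullback of $f$ and $g$, with legs $k\colon d\to c$ and $h\colon d\to b$, so that $fk=gh$. By Beck--Chevalley, $Lg\,\Sigma_f v=\Sigma_h\, Lk(v)$. Since $v\in(\Lambda_a u)_f$ and the tuple $\Lambda_a u$ lies in $\mf{z}(\idl\circ L)(a)$, the lax compatibility condition gives $\idl(Lk)((\Lambda_a u)_f)\subseteq(\Lambda_a u)_{fk}$. Here $\idl(Lk)$ is the map sending an ideal $I$ to the ideal $Lk[I]$ generated by its image. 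Hence $Lk(v)\in(\Lambda_a u)_{fk}=(\Lambda_a u)_{gh}=(\Lambda_b Lg(u))_h$. Therefore $\Sigma_h Lk(v)$ is one of the generators $\Sigma_h[(\Lambda_b Lg u)_h]$ of $\lambda_b(Lg u)$. This gives $Lg(\Sigma_f v)\in\lambda_b(Lg(u))$, so $Lg[\twoheaddownarrow u]\subseteq\twoheaddownarrow Lg(u)$.

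Given $w\ll u$ in $La$, we have $w\in\twoheaddownarrow u=\lambda_a(u)$, so $Lg(w)\in Lg[\lambda_a(u)]\subseteq\lambda_b(Lg u)=\twoheaddownarrow Lg(u)$. That is, $Lg(w)\ll Lg(u)$. The step that needs the most care is the bookkeeping of the $\mf{z}$ transition maps and the lax compatibility condition for $\idl\circ L$ (direction of the inequality, image versus generated ideal), together with checking that Beck--Chevalley applies for a merely weak pullback. Both follow from the conventions already fixed in the paper.
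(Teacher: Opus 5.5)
Your proof is correct and follows essentially the same route as the paper: reduce to $Lg[\lambda_a(u)]\subseteq\lambda_b(Lg(u))$, then for each $f$ combine three ingredients to get $Lg(\Sigma_f v)=\Sigma_h Lk(v)\in\Sigma_h[(\Lambda_b Lg u)_h]$. These are Beck--Chevalley on the pullback of $f$ along $g$, lax compatibility of the tuple $\Lambda_a u$, and naturality of $\Lambda$. Your reduction to generators via preservation of finite joins by $Lg$ is a slightly more careful version of the paper's appeal to ``direct images preserve unions'', and you correctly write $\Lambda_b Lg u$ where the paper has the typo $\Lambda_a Lg u$.
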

\begin{proof}
In view of the definition of the way-below segment of an element $u \in La$ from the proof of \cref{Internal LC implies sectionwise LC}, we have to show that 
if 
$$\textstyle w\in \lambda _a (u) =  \langle \bigcup _{\theta_1 f = a} \Sigma _f [(\Lambda _a u)_f] \rangle,$$ 
then 
$$\textstyle Lg(w) \in \lambda _b (Lgu) = \langle \bigcup _{\theta_1 h = b} \Sigma _h [(\Lambda _b Lgu)_h] \rangle ,$$
or in other words that 
$$\textstyle Lg[\langle \bigcup _{\theta_1 f = a} \Sigma _f [(\Lambda _a u)_f] \rangle] \subseteq \langle \bigcup _{\theta_1 h = b} \Sigma _h [(\Lambda _b Lgu)_h] \rangle.$$
Since direct images preserve unions, if suffices to show that, for any $f \colon c \to a$, $Lg[\langle \Sigma _f [(\Lambda _a u)_f] \rangle] \subseteq \Sigma _h [(\Lambda _a Lg u)_{h}] \subseteq \lambda_b(Lgu)$, where $h$ is the pullback of $f$ along $g$, as in the diagram
\[
\begin{tikzcd}
	c \times_a b \ar{r}{h} \ar{d}[']{k} & b \ar{d}{g} \\
	c \ar{r}{f} & a.
\end{tikzcd}
\]
Thus, using consecutively the Beck-Chevalley condition for $L$, lax compatibility of the family $(\Lambda _a u)_{\theta_1 f = a}$, and the naturality of $\Lambda$, we obtain the inclusion
\begin{eqnarray*}
Lg[\langle \Sigma _f [(\Lambda _a u)_f] \rangle] &=&  \Sigma _h Lk [(\Lambda _a u)_f ] \\
                  &\subseteq&  \Sigma _h [(\Lambda _a u)_{fk}]  \\
                  &=&  \Sigma _h [(\Lambda _a u)_{gh}]\\ 
     			  &=& 
     \Sigma _h [(\Lambda _b Lg u)_{h}], 
\end{eqnarray*} 
as desired.
\end{proof}

\begin{remark}\label{Remark CCD 1} In the presence of local compactness of the sections, the condition that the transitions 
$Lf \colon La \to Lb $ preserve the way-below relation is equivalent to the condition that their right adjoints 
$Lf _*$ preserve directed suprema. This comes in analogy to the results of \cite{ccd}, where the authors show that an internal constructively completely distributive (ccd) lattice (i.e an $L\in \nc{SupLat}[\ca{C}^{op}, \Set]$ with the supremum map $S \colon L \to L $ admitting a left adjoint) is an internal suplattice such that the sections $La$ are ccd lattices and the right adjoints to the transitions $Lf \colon La \to Lb$ preserve suprema. 
\end{remark}

\begin{proposition}\label{Prop: Interal way-below preserving maps}
	Let $L$ and $M$ be internal locally compact frames.  If $\varphi\colon L\to M\in\Frm[\ca{C}^{op},\Set]$ is a morphism which preserves the internal way-below relation, then for each object $a\in\ca{C}$ the frame morphism $\varphi^a\colon La\to Ma$ preserves the way-below relation of the sections.
\end{proposition}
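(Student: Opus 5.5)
The plan is to first turn the hypothesis that $\varphi$ preserves the internal way-below relation into a componentwise inclusion of ideals. Then I would push this inclusion through the formula for the section-wise way-below segments obtained in the proof of \cref{Internal LC implies sectionwise LC}. Recall that this formula is $\lambda_a(u)=\langle \bigcup_{\theta_1 f=a}\Sigma_f[(\Lambda_a u)_f]\rangle$.

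\emph{Step 1: unpacking the hypothesis.} By Kripke--Joyal semantics, internally $w\ll u$ at stage $a$ (for $w,u\in La$) means $\zeta_a(w)\leq \Lambda_a(u)$ in $\mf{z}(\idl\circ L)(a)$. This is because $w\ll u$ says $\downarrow\! w\subseteq \Lambda u$, where $\Lambda u$ is the internal way-below ideal. Comparing components, this says $Lf(w)\in(\Lambda_a u)_f$ for all $f$ with $\theta_1 f=a$. The hypothesis on $\varphi$ then reads: $\zeta_a(w)\leq\Lambda^L_a(u)$ implies $\zeta_a(\varphi^a w)\leq \Lambda^M_a(\varphi^a u)$.

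I want to deduce that for every $f\colon c\to a$ we have
\[
\varphi^{c}[(\Lambda^L_a u)_f]\subseteq(\Lambda^M_a\varphi^a u)_f .
\]
Take $w\in(\Lambda^L_a u)_f$. By naturality of $\Lambda^L$ we have $(\Lambda^L_c Lf(u))_{g}=(\Lambda^L_a u)_{fg}$. Lax compatibility of the tuple gives $Lg(w)\in(\Lambda^L_a u)_{fg}$, hence $\zeta_c(w)\leq\Lambda^L_c(Lf u)$, i.e.\ $w\ll Lf(u)$ internally at stage $c$. By the hypothesis, $\zeta_c(\varphi^c w)\leq\Lambda^M_c(\varphi^c Lf u)=\Lambda^M_c(Mf\varphi^a u)$. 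Reading off the identity component and using naturality of $\Lambda^M$ again gives $\varphi^c(w)\in(\Lambda^M_a\varphi^a u)_f$, as required.

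\emph{Step 2: pushing through the sections.} Suppose $w\ll u$ in $La$, i.e.\ $w\in\lambda^L_a(u)$. Since $\varphi^a$ preserves finite joins and is monotone, it sends generated ideals into generated ideals. Since $\varphi$ is a morphism of internal frames, $\varphi^a\Sigma^L_f=\Sigma^M_f\varphi^{c}$. Combining these with Step 1,
\[
\varphi^a[\lambda^L_a(u)]\subseteq\Big\langle\bigcup_{f}\varphi^a\Sigma^L_f[(\Lambda^L_a u)_f]\Big\rangle=\Big\langle\bigcup_f\Sigma^M_f\varphi^{c}[(\Lambda^L_a u)_f]\Big\rangle\subseteq\Big\langle\bigcup_f\Sigma^M_f[(\Lambda^M_a\varphi^a u)_f]\Big\rangle=\lambda^M_a(\varphi^a u).
\]
So $\varphi^a(w)\ll\varphi^a(u)$ in $Ma$.

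The main obstacle is Step 1: correctly interpreting "preserves the internal way-below relation" via the forcing semantics, and checking that membership in a component $(\Lambda_a u)_f$ really is the same as internal way-belowness at stage $\theta_0 f$. This check uses both naturality of $\Lambda$ and the lax compatibility condition defining $\mf{z}$. Step 2 is then routine, using only the formula for $\lambda_a$ and the commutation of $\varphi$ with the $\Sigma_f$.
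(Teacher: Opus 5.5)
Your proof is correct and follows essentially the same route as the paper: your Step 2 chain of inclusions is exactly the paper's computation. It uses the formula for $\lambda_a$, the commutation $\varphi^a\Sigma^L_f=\Sigma^M_f\varphi^{\vartheta_0 f}$, and the componentwise inclusion $\varphi^{\vartheta_0 f}[(\Lambda^L_a u)_f]\subseteq(\Lambda^M_a\varphi^a u)_f$. The only difference is how that inclusion is obtained: the paper reads it off directly from the lax square $\mathrm{idl}_{\hat{\ca{C}}}\varphi\circ\Lambda^L\leq\Lambda^M\circ\varphi$, while your Step 1 derives it from the elementwise (Kripke--Joyal) form of the hypothesis, using naturality of $\Lambda$ and lax compatibility, which is valid and slightly more explicit.
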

\begin{proof}
Given the internal left adjoint $\Lambda^L\colon L\to \idl_{\hat{\ca{C}}} L$ to the supremum morphism $S^L\colon \idl_{\hat{\ca{C}}} L\to L$, recall from \cref{Internal LC implies sectionwise LC} that the way-below segment $\twoheaddownarrow \! u$ of any $u\in La$ is given by $\lambda^L _a (u) = \langle \bigcup_{\theta_1 f = a} \Sigma^L_f [(\Lambda^L _a u)_f]  \rangle$. Hence, we must show that $\varphi^a[\lambda^L_a(u)]\subseteq\lambda^M_a(\varphi^a(u))$ as subsets of $Ma$.

The fact that $\varphi$ preserves the way-below relation internally means that we have a lax commutative square
\begin{center}
\begin{tikzcd}
	L\ar[r,"\varphi"]\ar[d,"\Lambda^L"']\ar[dr,phantom,"\leq"] & M\ar[d,"\Lambda^M"] \\
	\idl_{\hat{\ca{C}}} L\ar[r,"\idl_{\hat{\ca{C}}}\varphi"'] & \idl_{\hat{\ca{C}}}M
\end{tikzcd}
\end{center}
where $\idl_{\hat{\ca{C}}}\varphi$ is the morphism sending any $(I_f)_{\vartheta_1 f=a} \in \idl_{\hat{\ca{C}}} L(a)$ to the element $(\downarrow\varphi^{\vartheta_0f}[I_f])_{\vartheta1f=a} \in \idl_{\hat{\ca{C}}} M(a)$. Hence, we have an inclusion $\downarrow\varphi^{\vartheta_0f}[(\Lambda^L_au)_f]\subseteq\Lambda^M_a(\varphi^a(u))_f$ for every $u\in La$ and $f$ with $\vartheta_1f=a$. Using this inclusion, alongside the fact that $\varphi$ preserves internal suprema, we have that
\begin{align*}
	\varphi^a[\lambda^L_a(u)] &= \varphi^a\left[\left\langle{\textstyle \bigcup_{\theta_1 f = a} \Sigma^L_f [(\Lambda^L _a u)_f] } \right\rangle\right] \\
							  &\subseteq \left\langle \varphi^a\left[{\textstyle\bigcup_{\theta_1 f =a} \Sigma^L _f (\Lambda^L _a u)_f} \right] \right\rangle \\
						 	  &= \left\langle {\textstyle \bigcup_{\theta_1 f = a} } \varphi^a\Sigma^L _f [(\Lambda^L _a u)_f] \right\rangle \\
						 	  &= \left\langle  {\textstyle \bigcup_{\theta_1 f = a} } \Sigma^M_f\varphi^{\vartheta_0f} [(\Lambda^L _a u)_f] \right\rangle \\
						 	  &\subseteq \left\langle  {\textstyle \bigcup_{\theta_1 f = a} } \Sigma^M_f[\Lambda^M_a(\varphi^a(u))_f] \right\rangle = \lambda^M_a(\varphi^a(u)),
\end{align*}
completing the proof.
\end{proof}

Let us denote by $\nc{LCFrm}_{\ll}$ the category whose objects are locally compact frames and whose morphisms are frame homomorphisms which preserve the way-below relation. As usual, we denote by $\nc{LCFrm}_{\ll}[\ca{C}^{op},\Set]$ the corresponding category of locally compact frames and way-below preserving frame maps internal to $[\ca{C}^{op},\Set]$. The previous three propositions taken together thus show that there is a forgetful functor $U\colon \nc{LCFrm}_{\ll}[\ca{C}^{op},\Set]\to [\ca{C}^{op},\nc{LCFrm}_{\ll}]$. In fact, as is clear from the proofs, this works already at the level of continuous lattices and way-below preserving suplattice homomorphisms, i.e. there is a forgetful functor $U\colon \nc{ContLat}_{\ll}[\ca{C}^{op},\Set]\to [\ca{C}^{op},\nc{ContLat}_{\ll}]$ (cf.\ \cite{towncah}).

\begin{example}[The internal way-below relation]
We have seen that local compactness of an internal frames implies local compactness of its sections.  In a similar fashion, the internal way-below relation $\ll_L$ of an internal locally compact frame $L$ is contained in the way-below relation of each of its sections.  More precisely, the internal way-below relation defines a subobject ${\ll_L} \subseteq L \times L$, and so for each object $a \in \ca{C}$ we obtain a relation ${\ll_L}(a) \subseteq La \times La$ that is contained in the way-below relation ${\ll _{La} } \subseteq La \times La$ of the (locally compact) section $La$. Indeed $ (u,v) \in {\ll _L}(a) $ entails that for all $(I_f)_{\vartheta_1 f=a} \in \idl_{\hat{\ca{C}}}L (a)$, if 
$v\leq S_a( (I_f)_{\theta_1 f = a}) = \bigvee \{\bigvee_{\theta_1 f = a} \Sigma_f [I_f]\}$ then $(\downarrow \! Lf(u))_{\theta_1 f = a} \leq (I_f)_{\theta_1 f = a} $, which is equivalent to requiring that $u\in I_{\id_a}$.  Now if $v \leq \bigvee I $, for an ideal $I \in \idl(La)$, then
$\bigvee I \leq \bigvee \{\bigvee_{\theta_1 f = a} \Sigma_f [Lf[I]]\}$, and hence $u \in I$.  Thus, $(u,v) \in {\ll_{La}}$.

However, the inclusion ${\ll_L}(a) \subseteq {\ll_{La}} $ may be strict, as we now show in an example.  Thus, the internal way-below relation is, in general, markedly weaker than the way-below relation of each section.

Consider the presheaf topos on $\mathbf{2} = \{0\leq 1\}$ and the subobject classifier $\Omega \in [\mathbf{2}^{op}, \Set]$, which is a compact, and hence locally compact, internal frame. (Indeed, the subobject classifier is a compact frame in any topos, see \cite{t}, in essence because $\Omega$ is the internalisation of the one-point space; recall also that the topos of presheaves on $\mathbf{2} = \{0 \leq 1\}$ is equivalent to the topos of sheaves on the Sierpi\'{n}ski space.)  A simple calculation (using \cite[\S I.4]{sgl} or \cref{ex:comuting_subobj_class} if necessary) shows that the sections of the subobject classifier $\Omega$ are given by
\[\Omega _0 = \{0 \leq 1\}, \qquad \Omega _1 = \{0 \leq \# \leq 1\}.\]
Since both sections are finite chains, the way-below relation on each of them coincides with the order relation, and in particular
\[{\ll _{\Omega_1 }} = \{(0,0 ), (0,\#) ,(0,1), (\#,\#), (\#,1), (1,1) \}.\]
We will show that $(\#,\#) \not \in {\ll_\Omega}(1)$.

Note that the ideals on the sections $\Omega_0$, $\Omega_1$ are, respectively,
\[
\idl(\Omega_0) = \{\{0\},\{0,1\}\}, \qquad \idl(\Omega_1) = \{\{0\},\{0,\#\},\{0,\#,1\}\},
\]
and recall that an element of $\idl_{\hat{\mathbf{2}}} \Omega(1) = \mf{z}(\idl \circ \Omega)(1)$, i.e.\ an internal ideal, consists of a pair $(I_\leq, I_{\id_1})$ where $I_{\leq} \in \idl(\Omega_0)$, $I_{\id_1} \in \idl (\Omega_1)$, satisfying the lax-compatibility condition $\Omega(\leq)[I_{\id_1}] \subseteq I_{\leq}$.  To demonstrate that $(\#,\#) \not \in {\ll_\Omega}(1)$, it suffices to find such a pair $(I_\leq, I_{\id_1})$ such that $\# \leq S_1((I_\leq, I_{\id_1}))$ but for which $(\downarrow\!\Omega(\leq)(\#),\downarrow\!\#) = (\{0,1\},\{0,\#\}) \not \subseteq (I_\leq, I_{\id_1})$.  Taking $I_{\leq} $ to be $\{0,1\}$ and $I_{\id_1}$ to be $\{0\}$ satisfies these conditions, since 
\[S_1((I_{\leq},I_{\id_1})) = \Sigma_{\leq} 0 \lor \Sigma_{\leq} \# \lor \Sigma_{\id_1} 0 = 0 \lor \# \lor 0 = \# \]
but clearly $\downarrow\! \# = \{0,\#\} \not \subseteq \{0\}$.  Thus, there is a strict inclusion ${\ll _{\Omega }}(1) \subsetneqq {\ll _{\Omega _1 }}$.
\end{example}
\subsection{Stable local compactness}
A locally compact locale is said to be \emph{stably locally compact} when the way-below relation
is stable under binary infima. When combined with the notion of compactness, which we study below in \cref{subsec:compactness}, we obtain the notion of a \emph{stably compact} frame. We will show that the (internal) stable local compactness of an internal frame is inherited by all its sections.  To do so, we shall use the following general lattice theoretic result:
\begin{lemma}\label{Lemma: ideal generated by binary intersection}
	For any subsets $S, \; T \subseteq L$ of a distributive lattice, for the corresponding ideals they generate in $L$ we have that
	$$\langle S \cap T \rangle = \langle S \rangle \cap \langle T \rangle.$$
\end{lemma}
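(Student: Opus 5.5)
The inclusion $\langle S\cap T\rangle \subseteq \langle S\rangle\cap\langle T\rangle$ is immediate: $\langle-\rangle$ is monotone with respect to inclusion of subsets, and $S\cap T$ is contained in both $S$ and $T$. So all the work is in the reverse inclusion, which I will reduce to a single application of finite distributivity.

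For the reverse inclusion, let $x\in\langle S\rangle\cap\langle T\rangle$. By the explicit description of generated ideals recalled in the preliminaries, there are $s_1,\dots,s_n\in S$ and $t_1,\dots,t_m\in T$ with
\[
x\leq s_1\vee\cdots\vee s_n, \qquad x\leq t_1\vee\cdots\vee t_m .
\]
Hence $x\leq (\bigvee_i s_i)\wedge(\bigvee_j t_j)$. Applying the (finite) distributive law twice gives
\[
\textstyle (\bigvee_i s_i)\wedge(\bigvee_j t_j)=\bigvee_{i,j}(s_i\wedge t_j),
\]
so $x$ lies below a finite join of the elements $s_i\wedge t_j$.

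The remaining step, and the only real obstacle, is to see that each $s_i\wedge t_j$ lies in $\langle S\cap T\rangle$. Literally, for arbitrary subsets this fails. For example, take $S=\{a\}$ and $T=\{b\}$ with $a\neq b$ in a distributive lattice with $a\wedge b\neq 0$. Then $S\cap T=\emptyset$, so $\langle S\cap T\rangle=\{0\}$, whereas $\langle S\rangle\cap\langle T\rangle=\downarrow\!(a\wedge b)$. So I would make the lemma precise in one of two equivalent ways.

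\begin{itemize}
\item One option is to assume $S$ and $T$ are down-closed. Then $s_i\wedge t_j\leq s_i$ gives $s_i\wedge t_j\in S$, and likewise $s_i\wedge t_j\in T$, so $s_i\wedge t_j\in S\cap T$. This is the situation in the intended application to stable local compactness, where the sets in question, such as images $\Sigma_f[I_f]$ closed downward, or the union of such sets, can be replaced by their down-closures without changing the generated ideal.
\item The other option is to read $S\cap T$ as the set of binary meets $\{s\wedge t\mid s\in S,\ t\in T\}$, where the same computation applies verbatim.
\end{itemize}

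In either reading, each $s_i\wedge t_j$ belongs to the relevant generating set. Hence $x\in\langle S\cap T\rangle$, completing the reverse inclusion.

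Before fixing the formulation, I would check which form is actually used later in the proof of stable local compactness of the sections, so that the lemma is stated in exactly the generality needed there.
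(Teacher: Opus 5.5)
Your argument is the paper's own. You bound $x$ by $\bigvee_i s_i$ and $\bigvee_j t_j$, distribute to get $x\le\bigvee_{i,j}(s_i\wedge t_j)$, and conclude.

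You are also right that the last step is where it breaks. The paper simply asserts ``each $s_i\wedge t_j\in S\cap T$'', which has no justification for arbitrary subsets. Your counterexample is valid: in the chain $0<a<b$ with $S=\{a\}$ and $T=\{b\}$, one has $\langle S\cap T\rangle\subseteq\{0\}$ while $a\in\langle S\rangle\cap\langle T\rangle$. So the lemma as stated is false.

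Your down-closed version is the correct statement, and it is exactly what \cref{Prop: sections of stably locally compact} needs. There $S$ and $T$ are unions of the sets $\Sigma_f[(\Lambda_a u)_f]$, which are ideals by the paper's convention on direct images. Even the bare set-theoretic image of an ideal under $\Sigma_f$ is already down-closed, by the Frobenius computation in \cref{Lemma: Frobenius for ideals}. So the corrected lemma applies to these sets directly, with no need to pass to down-closures.
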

\begin{proof}
	We only need to show the inclusion of $\langle S \rangle \cap \langle T \rangle$ into $\langle S \cap T \rangle$.  Let $x \in \langle S \rangle \cap \langle T \rangle$ be such that 
	$x \leq s_1 \vee \cdots \vee s_m ,$ with $s_i \in S $ and $x\leq t_1 \vee \cdots \vee t_n,$ with $t_j \in T.$
	Then
	$$\textstyle x\leq \bigvee _i s_i \wedge \bigvee _j t_j = \bigvee _{i,j} (s_i \wedge t_j ),$$
	with each $s_i \wedge t_j \in S\cap T$, and hence $x \in \langle S \cap T \rangle .$
\end{proof}

\begin{lemma}\label{Lemma: Frobenius for ideals}
	For $L \in \Frm[\ca{C}^{op}, \Set],$
	if  $I \subseteq La,$ $J \subseteq Lb$ are ideals and $f\colon a \to b \in \mathcal{C},$ then
	$$\Sigma _f [I \cap Lf[J]] = \Sigma _f [I] \cap J ,$$
	i.e.\ we have Frobenius reciprocity for section-wise ideals.
\end{lemma}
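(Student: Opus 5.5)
The plan is to prove the two inclusions separately. Everything rests on one preliminary observation: if $g \colon M \to N$ is a map preserving finite joins (including $0$) and $K \subseteq M$ is an ideal, then the ideal $g[K]$ generated by the image is just the down-closure of the set-theoretic image. Indeed, $g(k_1) \vee \cdots \vee g(k_n) = g(k_1 \vee \cdots \vee k_n)$ with $k_1 \vee \cdots \vee k_n \in K$, and $0 = g(0)$. Both $\Sigma_f$ (a left adjoint) and $Lf$ (a frame map) preserve finite joins. Hence $y \in \Sigma_f[I]$ iff $y \leq \Sigma_f x$ for some $x \in I$, and $x \in Lf[J]$ iff $x \leq Lf(z)$ for some $z \in J$.

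For the inclusion $\subseteq$, it suffices to show that the set-theoretic image $\Sigma_f(I \cap Lf[J])$ lies in the ideal $\Sigma_f[I] \cap J$, since the latter is an intersection of ideals and hence an ideal. Take $x \in I \cap Lf[J]$ and pick $z \in J$ with $x \leq Lf(z)$. Clearly $\Sigma_f x \in \Sigma_f[I]$. Moreover $\Sigma_f x \leq \Sigma_f Lf(z) \leq z$ by the counit of $\Sigma_f \dashv Lf$, so $\Sigma_f x \in J$ by down-closure.

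For the inclusion $\supseteq$, take $y \in \Sigma_f[I] \cap J$ and choose $x \in I$ with $y \leq \Sigma_f x$. Then Frobenius reciprocity for the internal frame $L$ gives
\[
y = y \wedge \Sigma_f x = \Sigma_f\bigl(Lf(y) \wedge x\bigr).
\]
The element $Lf(y) \wedge x$ lies in $I$, because it is below $x$. It also lies in $Lf[J]$, because it is below $Lf(y)$ and $y \in J$. Hence $y \in \Sigma_f[I \cap Lf[J]]$.

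The only point needing care is the preliminary description of generated ideals as down-closures of images, which lets us avoid finite joins of generators. Once that is in place, the argument is just the sectionwise Frobenius condition together with the counit inequality $\Sigma_f Lf \leq \id$. I do not expect any real obstacle.
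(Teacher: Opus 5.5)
Your proposal is correct and follows essentially the same route as the paper. The paper's proof consists of the nontrivial inclusion $\supseteq$, obtained via Frobenius reciprocity as $y = y \wedge \Sigma_f x = \Sigma_f(Lf(y)\wedge x)$, exactly as you do. You additionally make explicit two points the paper leaves implicit: the easy inclusion via the counit $\Sigma_f Lf \leq \id$, and the fact that the ideal generated by the image of an ideal under a finite-join-preserving map is just the down-closure of that image.
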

\begin{proof}
	For the non-trivial inclusion, let
	$v \in \Sigma _f [I] \cap J, $ so that $v\leq \Sigma _f u $ with $u\in I $ and $v\in J.$
	Then $$v = v \wedge \Sigma _f u = \Sigma _f (Lf v \wedge u ),$$
	where $Lf v \wedge u \leq Lfv \in Lf[J],$ while $Lf v \wedge u \leq u \in I.$
	This means that $Lf v\wedge u \in Lf[J] \cap I,$ and so 
	$v= \Sigma _f (Lf v \wedge u ) \in \Sigma _f [I \cap Lf[J]]$, completing the proof.
\end{proof}
\begin{proposition}\label{Prop: sections of stably locally compact}
	If $L \in \Frm[\mathcal{C}^{op}, \Set]$ is stably locally compact, then all its sections $La$ are stably locally compact.
\end{proposition}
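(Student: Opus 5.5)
The plan is to reuse the section-wise way-below segment from \cref{Internal LC implies sectionwise LC}:
\[
\textstyle \lambda_a(u) = \left\langle \bigcup_{\theta_1 f = a} \Sigma_f[(\Lambda_a u)_f]\right\rangle .
\]
By that proposition, each $La$ is locally compact and $\lambda_a(u)$ is the way-below segment $\twoheaddownarrow u$ in $La$. Stable local compactness of $La$ means $\twoheaddownarrow(u\wedge v) = \twoheaddownarrow u \cap \twoheaddownarrow v$; since $1 \ll 1$ is also needed, the nullary case $\lambda_a(1) = La$ must be checked as well. We interpret internal stable local compactness as the statement that $\Lambda$ preserves finite infima internally. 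Because infima in $\idl_{\hat{\ca{C}}}L = \mf{z}(\idl\circ L)$ are computed componentwise (the frame structure of $\mf{z}$ from \S2), this gives, for all $u,v \in La$ and every $f$ with $\theta_1 f = a$,
\[
(\Lambda_a(u\wedge v))_f = (\Lambda_a u)_f \cap (\Lambda_a v)_f, \qquad (\Lambda_a 1)_f = L(\theta_0 f).
\]

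The nullary case is immediate: taking $f = \id_a$ gives $La \subseteq \lambda_a(1)$. The inclusion $\lambda_a(u\wedge v) \subseteq \lambda_a(u)\cap\lambda_a(v)$ holds by monotonicity. The main work is the reverse inclusion. Given $w \in \lambda_a(u)\cap\lambda_a(v)$, write $w \le \Sigma_{f_1}x_1 \vee\dots\vee \Sigma_{f_m}x_m$ and $w \le \Sigma_{g_1}y_1\vee\dots\vee\Sigma_{g_n}y_n$, where $x_i \in (\Lambda_a u)_{f_i}$ and $y_j \in (\Lambda_a v)_{g_j}$. By distributivity (this is the argument of \cref{Lemma: ideal generated by binary intersection}), it then suffices to show that each $\Sigma_f x \wedge \Sigma_g y$ lies in $\lambda_a(u\wedge v)$.

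This mixed-index term is the expected obstacle, and I would handle it exactly as in the proof of \cref{Thm: z left adjoint for internal frames}. Take the pullback
\[
\begin{tikzcd}
	\cdot \ar[r,"h"]\ar[d,"k"'] & \cdot \ar[d,"g"] \\
	\cdot \ar[r,"f"] & a .
\end{tikzcd}
\]
Frobenius reciprocity for $f$ and then Beck--Chevalley give
\[
\Sigma_f x\wedge\Sigma_g y = \Sigma_f\bigl(x\wedge Lf\Sigma_g y\bigr) = \Sigma_f\bigl(x\wedge \Sigma_k Lh\, y\bigr) = \Sigma_f\Sigma_k\bigl(Lk\,x\wedge Lh\,y\bigr) = \Sigma_{fk}\bigl(Lk\,x\wedge Lh\,y\bigr),
\]
where the third step is Frobenius reciprocity for $k$. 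By lax compatibility of the tuples $\Lambda_a u$ and $\Lambda_a v$, we have $Lk\,x \in (\Lambda_a u)_{fk}$ and $Lh\,y \in (\Lambda_a v)_{gh} = (\Lambda_a v)_{fk}$; lax compatibility is required to be an inclusion of ideals, not just of direct images. Hence $Lk\,x\wedge Lh\,y \in (\Lambda_a u)_{fk}\cap(\Lambda_a v)_{fk} = (\Lambda_a(u\wedge v))_{fk}$, so the term lies in $\Sigma_{fk}[(\Lambda_a(u\wedge v))_{fk}] \subseteq \lambda_a(u\wedge v)$. Alternatively, one could phrase this step through \cref{Lemma: Frobenius for ideals} at the level of ideals, but the element-wise computation above seems the most direct route.
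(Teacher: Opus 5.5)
Your proof is correct and is essentially the paper's argument. You reduce $\lambda_a(u)\cap\lambda_a(v)\subseteq\lambda_a(u\wedge v)$ via distributivity to mixed terms $\Sigma_f x\wedge\Sigma_g y$, and then place these in $\Sigma_{fk}[(\Lambda_a(u\wedge v))_{fk}]$ using a pullback, Frobenius, Beck--Chevalley, lax compatibility and componentwise meets in $\mf{z}(\idl\circ L)(a)$. The only differences are that you work element-wise, obtaining the exact equality $\Sigma_f x\wedge\Sigma_g y=\Sigma_{fk}(Lk\,x\wedge Lh\,y)$, where the paper works at the level of ideals via \cref{Lemma: Frobenius for ideals}, and that your nullary case is superfluous under the paper's definition (stability under binary infima only, with compactness treated separately) but harmless under your stronger reading.
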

\begin{proof}
	The internal frame $L$ being stably locally compact means that $\Lambda \colon L \to \idl_{\hat{\ca{C}}}L$ preserves binary infima, and hence $\Lambda _a \colon La \to \idl_{\hat{\ca{C}}}L(a)$ preserves binary infima for all objects $a \in \mathcal{C}$. Using the description of the pointwise way-below relation from \cref{Internal LC implies sectionwise LC} and \cref{Lemma: ideal generated by binary intersection}, we then have that
	\begin{eqnarray*}
		\lambda _a (u) \cap \lambda _a (w) &=& 
		\textstyle \langle \bigcup_{\theta_1 f = a} \Sigma _f (\Lambda _a u)_f \rangle \cap \langle \bigcup_{\theta_1 f = a} \Sigma _f (\Lambda _a w)_f \rangle \\
		 &=& \left\langle \left({\textstyle\bigcup _{\theta_1 f = a} \Sigma _f (\Lambda _a u)_f ) \cap  \bigcup _{\theta_1 f =a} \Sigma _f (\Lambda _a w)_f} \right) \right\rangle
		\\
		&= & \left\langle {\textstyle \bigcup _{\theta_1 f = \theta_1 g = a} } \left( \Sigma _f (\Lambda _a u)_f  \cap  \Sigma _g (\Lambda _a w)_g \right) \right\rangle
	\end{eqnarray*}
	From this point on, the Frobenius reciprocity for pointwise-ideals from \cref{Lemma: Frobenius for ideals} ensures that we can still follow the calculation in the proof of \cref{Thm: z left adjoint for internal frames}, and so we deduce that each $\Sigma _f (\Lambda _a u)_f  \cap  \Sigma _g (\Lambda _a w)_g $ is contained in 
	$$\Sigma _{fk} [(\Lambda _a u)_{fk} \cap (\Lambda _a w)_{fk}] = \Sigma _{fk} [(\Lambda _a (u \wedge w)_{fk}]  \subseteq \lambda _a (u \wedge w),$$
	where $k$ denotes the pullback of $g$ along $f$.  This completes the non-trivial inclusion $\lambda_a(u) \cap \lambda_a(w) \subseteq \lambda_a(u \land w)$, and thus we conclude that $\lambda_a(u) \cap \lambda_a(w) = \lambda_a(u \land w)$ as desired.
\end{proof}

\subsection{Compactness}\label{subsec:compactness}
A frame $L$ is \emph{compact} if whenever the supremum of an ideal is the top element of $L$, then the ideal contains the top element, and thus must be given by $\{L\}$ itself. In other words the, equalizer 
\[
\begin{tikzcd}
	E \ar{r} & \idl L \ar{rr}{\bigvee} \ar{rd} && L \\
	&& \mathbf{1} \ar{ru}[']{1} 
\end{tikzcd}
\]
in the category of partially ordered sets,
of the supremum and the constantly equal to the top element is $\{L\}$. From this characterisation, we derive that the sections of an internally compact frame are compact as well, but we show also that the converse does not hold.
\begin{proposition}\label{Prop: sections of compact}
If an internal frame $L \in \Frm[\mathcal{C}^{op}, \Set]$ is compact, then all its sections $La$ are compact.
\end{proposition}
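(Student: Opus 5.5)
The plan is to test internal compactness against a particular internal ideal built from an ordinary ideal of a single section, in the same way as the way-below example and the proof of \cref{Internal LC implies sectionwise LC}. Internal compactness of $L$ says that the equalizer of $S\colon \idl_{\hat{\ca{C}}}L \to L$ and the constant-top map is the terminal subobject $\{L\}$. Evaluating at $a\in\ca{C}$, and using $\idl_{\hat{\ca{C}}}L(a)=\mf{z}(\idl\circ L)(a)$ from \cref{coro:internal_ideals}, this reads: for every lax-compatible family $(I_f)_{\theta_1 f=a}$ with $S_a((I_f)_f)=1$, the family must be the top element of $\mf{z}(\idl\circ L)(a)$. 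That top element is $(L(\theta_0 f))_{\theta_1 f=a}$, which is the image of the global top, i.e.\ of the unique element of $\{L\}(a)$. In particular, $1\in I_{\id_a}$.

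Given an ideal $I\in\idl(La)$ with $\bigvee I=1$, I would form the family $(Lf[I])_{\theta_1 f=a}$, where $Lf[I]$ is the ideal generated by the image. This family lies in $\mf{z}(\idl\circ L)(a)$, because $Lg[Lf[I]]\subseteq L(fg)[I]$ by functoriality; it is just $\eta$ applied to $I$. The computation already carried out at the end of the proof of \cref{Internal LC implies sectionwise LC} shows that $S_a((Lf[I])_f)=\bigvee I$. The key step there is that $\Sigma_f Lf[I]\subseteq I$ by the counit of $\Sigma_f\dashv Lf$ and down-closure, while the $f=\id_a$ term gives $I$ itself. Hence $S_a((Lf[I])_f)=1$.

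By internal compactness this family is the top, so its $\id_a$-component $I$ equals $La$, i.e.\ $1\in I$. That is exactly the compactness of $La$.

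The only point needing care is the identification of the equalizer's sections. Limits in $\hat{\ca{C}}$ are computed pointwise, so $E(a)=\{x\in\idl_{\hat{\ca{C}}}L(a)\mid S_a(x)=1\}$. The statement $E=\{L\}$ then means that $E(a)$ is the singleton containing the top element of $\idl_{\hat{\ca{C}}}L(a)$. I expect this bookkeeping to be the main, though mild, obstacle; everything else is a reuse of earlier computations.
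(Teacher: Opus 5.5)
Your proposal is correct and follows essentially the same route as the paper. Both evaluate the compactness equalizer pointwise at $a$ and test it on the lax-compatible family $(Lf[I])_{\theta_1 f = a}$. Both get $S_a\bigl((Lf[I])_{\theta_1 f=a}\bigr)=\bigvee I=1$ from $\Sigma_f Lf[I]\subseteq I$ together with the $\id_a$ term, and both read off $I=La$ from the $\id_a$-component. Your observation that this family is $\eta^a_{\idl\circ L}(I)$, and is therefore lax-compatible, is a detail the paper leaves implicit.
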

\begin{proof}
The diagrammatic condition expressing compactness above remains an equalizer when evaluated at any $a\in \mathcal{C}$, since limits and colimits in $[\ca{C}^{op},\Set]$ are computed pointwise.
Thus, whenever a lax compatible family $(I_f)_{\theta_1 f = a} \in \idl_{\hat{\ca{C}}}L(a) = \mf{z}(\idl \circ L)(a)$ is chosen for which the (internal) supremum
\[
S_a( (I_f)_{\theta_1 f = a} )= \bigvee {\textstyle \{ \bigcup _{\theta_1 f = a} \Sigma _f [I_f]\}}
\]
is equal to the top element $1 \in La$, then $(I_f)_{\theta_1 f = a}$ equals the family $(L(\theta_0 f))_{\theta_1 f = a}$.
Now given an ideal $I \subseteq La$ with $\bigvee I=1_{La}$, the family $(Lf[I])_{\theta_1 = a} \in \idl L(a)$ has the property that $S_a((Lf[I])_{\theta_1 = a}) = 1$, because 
$I$ participates in the union (via the $f=\id_a $ component) and so
\[
\bigvee I \leq  \bigvee {\textstyle \{ \bigcup _{\theta_1 f = a} \Sigma _f [Lf[I]]\}} \leq \bigvee I =1.
\] 
Hence, by the internal compactness of $L$, we have $(Lf[I])_{\theta_1 f = a} = (L(\vartheta _0 f))_{\theta_1 f = a}$.  By taking the $\id_a$ component of the tuple, we conclude that $I =La$, demonstrating that $La$ is compact.
\end{proof}
By combining \cref{Prop: sections of compact} and \cref{Prop: sections of stably locally compact}, we get the following corollary:
\begin{corollary}
	If $L \in \Frm[\ca{C}^{op},\Set]$ is stably compact, then so too are all its sections.
\end{corollary}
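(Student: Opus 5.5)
The corollary should follow by unpacking ``stably compact'' into its two ingredients and applying the two earlier propositions one at a time. I take stably compact to mean \emph{compact and stably locally compact}, read internally in $[\ca{C}^{op},\Set]$. So assume $L \in \Frm[\ca{C}^{op},\Set]$ is internally locally compact, with left adjoint $\Lambda \colon L \to \idl_{\hat{\ca{C}}}L$ to the supremum map $S$, that $\Lambda$ preserves binary infima, and that $L$ is internally compact.

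The argument then has three steps.

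\begin{enumerate}
\item \emph{Compactness of the sections.} Internal compactness of $L$ gives, by \cref{Prop: sections of compact}, that each section $La$ is a compact frame.

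\item \emph{Stable local compactness of the sections.} Internal stable local compactness gives, by \cref{Prop: sections of stably locally compact}, that each $La$ is stably locally compact. Its proof shows more explicitly that $La$ is locally compact, via \cref{Internal LC implies sectionwise LC}, with way-below segment $\lambda_a(u) = \langle \bigcup_{\theta_1 f = a} \Sigma_f [(\Lambda_a u)_f] \rangle$. It also shows that $\lambda_a(u)\cap\lambda_a(w)=\lambda_a(u\wedge w)$, so the way-below relation of $La$ is stable under binary infima.

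\item \emph{Conclusion.} Steps 1 and 2 together say that each $La$ is compact and stably locally compact, which is exactly stable compactness of the section.
\end{enumerate}

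There is no real obstacle here. The only point to check is that ``stably compact'' means precisely the conjunction of the two properties treated in the cited propositions, as the paper states when it introduces the notion. Some authors also require that the top element be way-below itself, i.e.\ $1\ll 1$. For a locally compact frame this is equivalent to compactness, so it is already covered by Step 1.
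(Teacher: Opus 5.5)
Your proposal is correct and matches the paper's argument exactly: the paper also obtains this corollary by combining \cref{Prop: sections of compact} and \cref{Prop: sections of stably locally compact}, with stably compact taken to mean compact plus stably locally compact. Your remark about $1 \ll 1$ is harmless, since compactness of a frame is equivalent to $1 \ll 1$ in general, not only for locally compact frames.
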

\begin{example}
	The converse to \cref{Prop: sections of compact} does not hold.  Consider the constant presheaf $L \colon \mathbf{2}^{op} \to \Frm$ on a (non-trivial) compact frame $L$, which defines an internal frame of $[\mathbf{2}^{op},\Set]$ (recall that $\mathbf{2}$ denotes the category $\{0 \leq 1\}$).  We can show that $L$ is not internally compact by demonstrating that the equalizer of sets
	\[
	\begin{tikzcd}
		E(1) \ar[hook]{r} & \mf{z}(\idl \circ L)({1}) \ar[shift left ]{r}{S_\mathbf{1}} \ar[shift right ]{r}[']{1} & L({1})
	\end{tikzcd}
	\]
	contains, in addition to the maximal internal ideal $(L)_{\theta_0 f = {1}}$, a non-maximal tuple $(I_f)_{\theta_0 f = {1}} \in \mf{z}(\idl \circ L)({1})$.
	
	We take the family $(I_{\leq},I_{\id_1})$ where $I_{\id_1} = \{0\} \neq L$ and $I_\leq = L$, which is a lax-compatible family of ideals, and so $(I_{\leq},I_{\id_1}) \in \mf{z}(\idl \circ L)({1})$.  We then calculate that
	\[
		S_{1}((I_{\leq},I_{\id_1})) = \bigvee \left\{ { \Sigma_{\leq}[L] \cup \Sigma_{\id_1}[\{0\}] }\right\} = \bigvee \{L \cup \{0\}\} = 1,
	\]
	and yet $(L)_{\theta_0 f = {1}} \neq (I_f)_{\theta_0 f = {1}}$, proving that $L$ is not internally compact as desired.
\end{example}

\subsection{Hausdorffness}
A locale is Hausdorff if its diagonal $\Delta \colon  L \to L \times L$ is closed.
A map of locales $f \colon L \to M$ is closed if, for all $u\in L,$ $v\in M,$ $f_* (u \vee f^* v) = f_* u \vee v$, which can be expressed diagrammatically as the commutativity of the following  diagram of posets
\[\xy \xymatrix{ L \times M \ar[rr]^{f_* \times \id_M} \ar[d]_{f^*} & & M \times M \ar[dd]^{-\vee - }\\
 L \times L \ar[d]_{- \vee -} & & \\
	L \ar[rr]^{ f_* }	& & M.	}
		\endxy\]
Hence a map $f \colon L \to M \in \Frm[\mathcal{C}^{op},\Set]$ is closed if and only if 
$f_a  \colon La \to Ma$ is closed for all $a \in \mathcal{C}$.  We show that if $L$ is a presheaf of Hausdorff frames, then $\mf{z}L$ is an internally Hausdorff frame.  As a consequence, we derive explicitly the well-known fact that the subobject classifier of a presheaf topos is internally compact and Hausdorff, and thus we are able to give an example, suggested to us by S.~Henry, where the Hausdorffness of an internal frame is \emph{not} inherited by its sections.
\begin{proposition}\label{prop:z_preserves_closedness}
Given a natural transformation  $\varphi \colon  L\to M  \in [\mathcal{C}^{op},\Frm ]$ such, that for all objects $a \in \ca{C}$, the component $\varphi ^a \colon La \to Ma$ is closed, 
then the induced map $\mathfrak{z}\varphi ^a \colon \mathfrak{z}L(a) \to \mathfrak{z}M (a),$ is a closed frame map, and hence $\mf{z}\phi \colon \mf{z} L \to \mf{z}M$ is a closed morphism of internal locales.
\end{proposition}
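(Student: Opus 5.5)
We verify the closedness identity componentwise, using the explicit descriptions of $\mf{z}\varphi^a$ and its right adjoint $\mf{z}\varphi^a_*$ from the Remark following the construction of $\mf{z}$. In a frame map $\varphi\colon L\to M$ (frame direction), closedness of the corresponding localic map means
\[
\varphi_*(u\vee \varphi(v)) = \varphi_*(u)\vee v \qquad (u\in M,\ v\in L),
\]
which is the frame-side reading of the diagram above. Fix $a\in\ca{C}$, and take $(u_f)_{\vartheta_1 f=a}\in\mf{z}M(a)$ and $(v_f)_{\vartheta_1 f=a}\in\mf{z}L(a)$.

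First, binary joins in $\mf{z}L(a)$ and $\mf{z}M(a)$ are computed componentwise: the componentwise join of two lax-compatible families is again lax-compatible, since each $Lg$ preserves joins. Second, $\mf{z}\varphi^a$ acts componentwise by $\varphi^{\vartheta_0 f}$, and $\mf{z}\varphi^a_*$ acts componentwise by $\varphi^{\vartheta_0 f}_*$. Combining these, the left-hand side has $f$-component
\[
\varphi_*^{\vartheta_0 f}\bigl(u_f\vee\varphi^{\vartheta_0 f}(v_f)\bigr),
\]
and the right-hand side has $f$-component $\varphi_*^{\vartheta_0 f}(u_f)\vee v_f$. These agree because $\varphi^{\vartheta_0 f}$ is closed by hypothesis. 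Since two families are equal exactly when all their components are, $\mf{z}\varphi^a$ is closed.

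For the last clause, we invoke the observation made just before the statement: a morphism of internal frames is closed if and only if each of its components is closed. This holds because the closedness diagram involves only finite products, joins and right adjoints, all of which are computed sectionwise. Since $\mf{z}\varphi$ is already known to be a morphism in $\Frm[\ca{C}^{op},\Set]$, the sectionwise result gives closedness of $\mf{z}\varphi$ as a map of internal locales.

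The only point needing care is confirming that the right adjoint $\mf{z}\varphi^a_*$ really is the componentwise one, i.e.\ that it lands in $\mf{z}L(a)$. That was already established in the earlier Remark, so I expect no genuine obstacle; the argument is purely componentwise.
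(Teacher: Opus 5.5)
Your proposal is correct and follows the same route as the paper, whose proof simply notes that the frame operations on $\mf{z}L(a)$ and $\mf{z}M(a)$, as well as $\mf{z}\varphi^a$ and $\mf{z}\varphi^a_*$, are all computed pointwise. Your componentwise verification of $\mf{z}\varphi^a_*(U\vee\mf{z}\varphi^a(V))=\mf{z}\varphi^a_*(U)\vee V$, together with the sectionwise criterion for closedness stated just before the proposition, is that argument written out in full.
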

\begin{proof}
This follows immediately from the fact that the frame structure on both $\mf{z}L(a)$ and $\mf{z}M(a)$, the action of $\mf{z}\phi^a$, and the action of $\mf{z}\phi^a_\ast$ are all computed pointwise (see \cref{sec:free_const}).
\end{proof}

\begin{proposition}\label{Prop: sections Hausdorff implies zL Hausdorff} 
	Let  $L\in [\mathcal{C}^{op},\Frm]$ be a presheaf of frames such that each section $La$ is Hausdorff. Then $\mathfrak{z}L$ is Hausdorff.
\end{proposition}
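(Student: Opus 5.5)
The plan is to realise the internal diagonal of $\mf{z}L$ as $\mf{z}$ applied to the sectionwise diagonal, and then invoke \cref{prop:z_preserves_closedness}. The key observation is that $\mf{z}$, being a left adjoint $[\ca{C}^{op},\Frm]\to\Frm[\ca{C}^{op},\Set]$ by \cref{Thm: z left adjoint for internal frames}, preserves colimits. Coproducts of frames are products of locales, so $\mf{z}$ sends the sectionwise coproduct $L\oplus L$ (with $(L\oplus L)(a) = La\oplus La$) to the internal coproduct $\mf{z}L\oplus\mf{z}L$. The internal coproduct is the frame of the internal locale product $\mf{z}L\times\mf{z}L$. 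Moreover, the codiagonal $\nabla\colon L\oplus L\to L$ is sent to the codiagonal $\nabla\colon \mf{z}L\oplus\mf{z}L\to\mf{z}L$, since both are defined by the same universal property. As a frame map, this codiagonal is exactly the internal localic diagonal $\Delta\colon \mf{z}L\to\mf{z}L\times\mf{z}L$.

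The concrete steps are as follows.

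First, I verify that the coproduct in $[\ca{C}^{op},\Frm]$ is computed pointwise. This holds because colimits in a functor category into the cocomplete category $\Frm$ are pointwise. Its codiagonal at $a$ is the codiagonal $La\oplus La\to La$, i.e.\ the frame map of the diagonal $\Delta_{La}$ of the locale $La$.

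Second, I check the naturality of the codiagonal. The identification $\mf{z}(L\oplus L)\cong \mf{z}L\oplus\mf{z}L$ comes from the left adjoint preserving coproducts, with coprojections $\mf{z}(\iota_i)$. Under this identification, $\mf{z}(\nabla)$ composed with $\mf{z}(\iota_i)$ gives $\mf{z}(\id)=\id$, so $\mf{z}(\nabla)$ is the internal codiagonal.

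Third, I apply the hypothesis. Each $La$ is Hausdorff, so each component $\nabla_a$ is closed. \cref{prop:z_preserves_closedness} then says that $\mf{z}\nabla$ is a closed morphism of internal locales. Hence the internal diagonal of $\mf{z}L$ is closed, i.e.\ $\mf{z}L$ is Hausdorff.

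The main obstacle I expect is the bookkeeping in the second step. I must make sure that "closed" in \cref{prop:z_preserves_closedness} refers to the same direction and the same maps as the localic diagonal. I also need the internal product of locales in $[\ca{C}^{op},\Set]$ to be the coproduct in $\Frm[\ca{C}^{op},\Set]$, not a sectionwise coproduct. The left-adjoint argument handles this cleanly, because the internal coproduct need not be computed pointwise. Closedness of $\mf{z}\nabla$ only uses the explicit pointwise description of $\mf{z}\nabla$ and $(\mf{z}\nabla)_*$, which is the content of \cref{prop:z_preserves_closedness}. If closedness were required to be checked against the isomorphic copy $\mf{z}L\oplus\mf{z}L$, I would note that closedness is invariant under composition with isomorphisms, which settles it.
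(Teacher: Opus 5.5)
Your proposal is correct and follows the paper's proof: coproducts in $[\ca{C}^{op},\Frm]$ are computed pointwise, the left adjoint $\mf{z}$ preserves them, and \cref{prop:z_preserves_closedness} is applied to the sectionwise codiagonal. Your explicit check that $\mf{z}(\nabla)$ becomes the internal codiagonal under $\mf{z}(L\oplus L)\cong\mf{z}L\oplus\mf{z}L$, via the coprojections $\mf{z}(\iota_i)$, spells out a step the paper leaves implicit.
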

\begin{proof}
	Colimits in $[\mathcal{C}^{op},\Frm]$ are computed pointwise, and so the coproduct of two presheaves of frames $L$, $M$ is given by the presheaf $a\mapsto La \otimes Ma$, which we denote by $L\otimes M$. 
	The functor $\mathfrak{z}$, being a left adjoint, preserves colimits and so $\mathfrak{z} (L \otimes M) \cong \mathfrak{z}L \sqcup \mathfrak{z}M,$ where $\mf{z}L\sqcup\mf{z} M$ denotes the coproduct of $\mf{z}L, \mf{z}M \in \Frm[\mathcal{C}^{op},\mathbf{Set}]$ as internal frames.
	Since each $La$ is Hausdorff, we have that the co-diagonal
	$\nabla _{La} \colon La \otimes La \to La $ is closed. By \cref{prop:z_preserves_closedness}, each 
	$\mathfrak{z} (L \sqcup L)(a) \cong \mathfrak{z}(La \otimes La) \to \mathfrak{z}La$ is closed and thus 
	the co-diagonal $\mathfrak{z}L \sqcup \mathfrak{z}L \to L$ is closed, i.e $\mathfrak{z}L$ is Hausdorff. 
\end{proof}
In particular, consider the constant presheaf $\mathbf{2} \colon \ca{C}^{op} \to \Frm$ on the two element frame, as considered in \cref{ex:comuting_subobj_class}.  Each section is Hausdorff, and so $\mf{z}\mathbf{2}$, i.e.\ the subobject classifier $\Omega \in \Frm[ \ca{C}^{op}, \Set ]$, is Hausdorff too. 
\begin{example}
We close with an example that the sections $La$ of an internally Hausdorff frame $L$ need not be Hausdorff. Our thanks go to S.~Henry for pointing this out to us.  Consider the example above of the subobject classifier $\Omega$ for the particular presheaf topos $[ \mathbf{2}^{op}, \Set ]$.  We have just seen that $\Omega$ is a Hausdorff internal frame. But it is easily calculated that $\Omega (1)$ has three elements, and hence  is isomorphic  to the Sierpi\'nski frame, which is obviously not Hausdorff. 
\end{example}

\subsection*{Acknowledgements}
We thank Simon Henry and Christopher Townsend for sharing their expertise with us, and also J\'er\'emie Marqu\`es for the many encouraging conversations.

\smallskip
\noindent
\begin{minipage}{0.4\textwidth}
	\begin{center}
		\includegraphics[height=1.1cm]{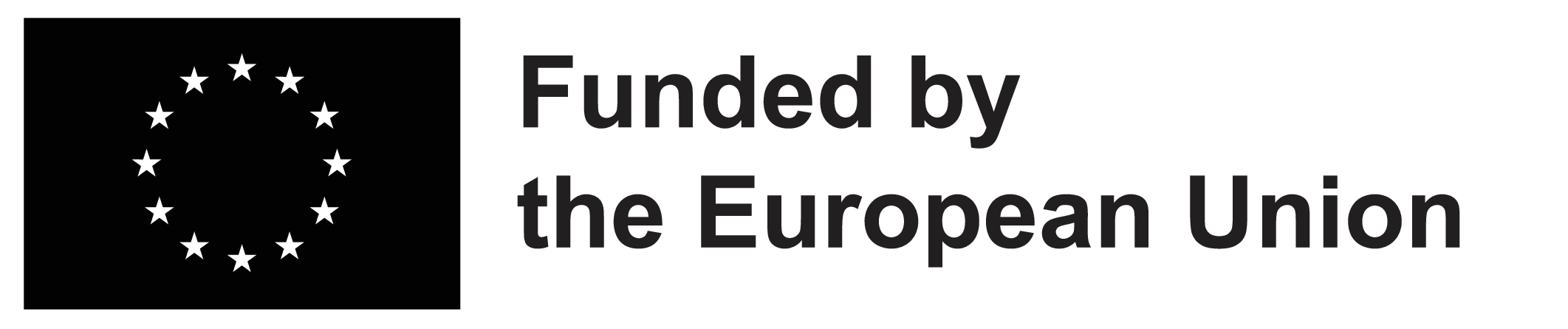}
	\end{center}
\end{minipage}
\begin{minipage}{0.6\textwidth}
	The third author was supported by Marie Sk\l{}odowska-Curie Grant No.\ 101273434.
\end{minipage}

\printbibliography

\end{document}